\newlength{\fighskip} \fighskip=2pt
\newlength{\figvskip} \figvskip=3pt
\numberwithin{equation}{section}
\newcommand{\R}{\mathbb{R}}
\newcommand{\T}{\mathbb{T}}
\newcommand{\Z}{\mathbb{Z}}
\newcommand{\M}{\mathcal{M}}
\newcommand{\A}{\mathcal{A}}
\newcommand{\BA}{\mathbb{A}}
\newcommand{\nn}{\nonumber}
\newcommand{\be}{\begin{eqnarray}}
\newcommand{\ee}{\end{eqnarray}}
\newcommand{\EQN}[1]{\begin{equation*}\begin{split} #1 \end{split}\end{equation*}}
\theoremstyle{plain}
\newtheorem{theorem}{Theorem}[section]
\newtheorem{definition-theorem}{Theorem/Definition}[section]
\newtheorem{lemma}[theorem]{Lemma}
\newtheorem{lemma-definition}[theorem]{Lemma/Definition}
\newtheorem{proposition}[theorem]{Proposition}
\newtheorem{conjecture}[theorem]{Conjecture}
\theoremstyle{definition}
\newtheorem{definition}[theorem]{Definition}
\newtheorem{example}[theorem]{Example}
\theoremstyle{remark}
\newtheorem{remark}[theorem]{Remark}
\begin{document}
\title{Action and rotation number of periodic orbits of area-preserving annulus diffeomorphisms}
\author{Huadi Qu}
\date{}
\maketitle

\abstract{We study periodic orbits for area-preserving surface diffeomorphisms, particularly some global properities related to the action function and rotation numbers. We generalize recent works of Machel Hutchings\cite{Hut16}, proving the existence of periodic orbits with certain action and rotation values.}

\section{Introduction}

\ \ \ We aim to make a preliminary atempt at characterizing the periodic orbits of area-preserving surface diffeomorphisms using two important invariants: the rotation number and the action function. 

Periodic orbits for area-preserving diffeomorphisms on surfaces is a classic subject in dynamical systems, recently converting area-preserving surface diffeomorphisms to Reeb flow via open book decomposition provided a powerful way for understanding periodic orbits of area-preserving surface diffeomorphisms, and the ECH (Embedded Contact Homology) theory proves to be powerful. In particular, the dynamical invariant, action, of periodic orbit plays a very important role. The definition of the action function depends on a preferred closed 1-form, and this dependence is related with another important invariant, the rotation number, of periodic orbits, or more general invariant measure of area-preserving surface diffeomorphisms. It's resonable to put this two important invariant together to describe the distribution of periodic orbits more pricisely.

      The main result of this article is to extend the main therorem in \cite{Hut16} of Machel Hutchings on disk to a theorem on Annulus. Let $D^{2}$ be the unit disk with the standard area form $\omega$. Under the usual polar coordinates $(r,\theta)$, $\omega=2rdrd\theta$. Consider an area-preserving diffeomorphism $f$ on $D^{2}$ that is a pure rotation near $\partial D^{2}$, fix a primitive $\beta$ of $\omega$ such that $\beta|_{\partial D^{2}}=d\theta$. Then there is a smooth function $g$ on $D^2$ such that 
$$f^{*}\beta-\beta=dg$$ 
called the action function of $f$. After fix a boundary condition of $g$, we can define the {\it Calabi invariant} of $f$ as 
$$Cal(f)=\int_{D^2}g\omega.$$

Denote the set of periodic orbit of $f$ as $\mathcal{P}(f)$, for $\gamma=(x_{0},x_{1}=f(x_{0}),...,x_{k}=x_{0})$, define the mean action of $\gamma$ as 
$$ \A(\gamma)=\frac{1}{k}\sum_{i=0}^{k-1}f(x_i).$$


In \cite{Hut16}, Hutchings proved the following theorem:

\begin{theorem}\label{Hut}
Let $\theta_{0}\in\R$, let $f$ be an area-preserving, orientation-preserving diffeomorphism of $D$ which agrees with rotation by angle $\theta_0$ near the boundary. for the fixed $\beta$, fixed the value of the corresopiding action function on the boundary as $g(\partial D^{2})=\theta_{0}$, suppose that $$Cal(f)<\theta_{0},$$ Then 
$ \inf\{\A(\gamma)|\gamma\in\mathcal{P}(f)\}\leq Cal(f).$
\end{theorem}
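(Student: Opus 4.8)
The plan is to follow Hutchings' strategy of converting the dynamical problem into a question about Reeb orbits and then invoking the asymptotics of ECH capacities. First I would reduce to the case where $f$ is nondegenerate: since the hypothesis $Cal(f)<\theta_0$ is an open condition and both $Cal(f)$ and the mean actions vary continuously under $C^{\infty}$-small perturbation, I can approximate $f$ by nondegenerate maps $f_n\to f$ satisfying the same hypothesis, prove the bound for each $f_n$, and recover the statement for $f$ by a limiting argument. Here one must keep in mind that the conclusion only asserts a bound on the infimum, so no minimizing orbit need survive in the limit, but a convergent subsequence of low-action orbits still yields the desired estimate.

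Next I would build the contact-geometric model. Using the open book decomposition with disk page and monodromy $f$, I form the mapping torus of $f$ and cap off the boundary to obtain $S^3$ equipped with a contact form $\lambda$ whose Reeb flow is the suspension flow of $f$. Under this correspondence the interior Reeb orbits are in bijection with $\mathcal{P}(f)$, the binding is a single elliptic Reeb orbit $e$ whose rotation number is governed by $\theta_0$, and---after the normalization fixing $g|_{\partial D^2}=\theta_0$---the symplectic action of the Reeb orbit attached to a period-$k$ orbit $\gamma$ equals $k\,\A(\gamma)$ up to the standard affine correction, while the action of $e$ equals $\theta_0$. The crucial bookkeeping is that the contact volume $\mathrm{vol}(S^3,\lambda)=\int_{S^3}\lambda\wedge d\lambda$ is an explicit expression in $Cal(f)$ and $\theta_0$, and I would verify that the hypothesis $Cal(f)<\theta_0$ translates precisely into this volume being strictly smaller than the value it takes for the rigid rotation by $\theta_0$.

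The engine of the argument is the volume property of ECH capacities of Cristofaro-Gardiner, Hutchings and Ramos: the ECH spectrum $0<c_1\leq c_2\leq\cdots$ of $(S^3,\lambda)$ satisfies $\lim_{k\to\infty}c_k^2/k=2\,\mathrm{vol}(S^3,\lambda)$, and each $c_k$ is the symplectic action of an ECH generator of grading $2k$. I would then compare the true spectrum with the binding-only spectrum: for the elliptic orbit $e$, the ECH index of the current $m\,e$ grows quadratically in $m$ with a rate set by $\theta_0$, so that a generator supported only on $e$ of grading $2k$ has action asymptotic to a constant multiple of $\sqrt{k}$, the constant being determined solely by $\theta_0$. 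Since the volume drops below its rigid-rotation value exactly when $Cal(f)<\theta_0$, the action $c_k\sim\sqrt{2\,\mathrm{vol}\,k}$ of the true minimizer is eventually strictly smaller than any binding-only generator of the same grading. The deficit must therefore be supplied by interior orbits; comparing their contribution to the action against their contribution to the grading forces a weighted average of their mean actions to be at most $Cal(f)$, whence some $\gamma\in\mathcal{P}(f)$ has $\A(\gamma)\leq Cal(f)$.

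The main obstacle I anticipate is the quantitative index/action bookkeeping: I must compute the ECH index contributions of the binding orbit and of the interior currents precisely enough---through the relative Chern class, the relative self-intersection (writhe and linking) term, and the Conley-Zehnder terms---to show that the hypothesis $Cal(f)<\theta_0$ is exactly what drives the $c_k$-minimizers off the binding and onto interior orbits, and to extract the sharp bound $Cal(f)$ rather than merely $\theta_0$. A secondary difficulty is the passage from nondegenerate to general $f$, where I must ensure the selected low-action orbits neither escape to the boundary nor disappear in the limit.
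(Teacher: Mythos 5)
The statement you were asked to prove is Theorem \ref{Hut}, which this paper does not prove at all: it is imported verbatim from Hutchings \cite{Hut16} and then used as a black box, so there is no in-paper proof to compare against. Your proposal is, in outline, a faithful reconstruction of Hutchings' original argument in \cite{Hut16}: reduction to the nondegenerate case, the mapping-torus/open-book contact form on $S^3$ whose Reeb orbits are the binding plus the periodic orbits of $f$ with symplectic action encoding mean action, the Cristofaro-Gardiner--Hutchings--Ramos volume property $\lim_k c_k^2/k = 2\Vol(S^3,\lambda)$, and the index/action bookkeeping that forces the spectral-invariant minimizers off the binding and onto interior orbits of mean action at most $Cal(f)$. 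You have also correctly identified where the real work lies (the quantitative ECH-index estimates via Conley-Zehnder terms, writhe and linking bounds, plus care in the degenerate limit), so the proposal is the right strategy and essentially the same one as the cited source, though as written it is a roadmap rather than a complete proof.
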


We remark here that in the origional theorem Hutchings used $\theta\in[0,2\pi],\omega=\frac{1}{2\pi}rdrd\theta$ and let $f(r,\theta)=(r,\theta+2\pi\theta_{0})$ while $r$ near 1. In this article we will assume $\theta\in[0,1]$ and use $\omega=2rdrd\theta$.

The assumption that $f$ is a rigid rotation near the boundary is removed in \cite{Pir24}. And recently \cite{Bra24} extend the theorem to the following Theorem.

\begin{theorem}\label{shangxia}
\be \inf \{\A(\gamma)|\gamma\in\mathcal{P}(f)\}\leq Cal(f)\leq\sup\{\A(\gamma)|\gamma\in\mathcal{P}(f)\}. \ee
\end{theorem}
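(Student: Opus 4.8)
The plan is to deduce both inequalities from the asymptotics of the ECH/PFH spectral invariants of $f$, using the duality $f\mapsto f^{-1}$ as a structural simplification. First I would record that symmetry. If $g$ is the action function of $f$ normalized by $g|_{\partial D^2}=\theta_0$, then differentiating $f^{*}\beta-\beta=dg$ and pulling back by $f^{-1}$ shows that $-g\circ f^{-1}$ is the action function of $f^{-1}$, with boundary value $-\theta_0$; since $f^{-1}$ preserves $\omega$ this gives $Cal(f^{-1})=-Cal(f)$. Moreover each periodic orbit $\gamma$ of $f$, traversed backwards, is a periodic orbit of $f^{-1}$ on the same point set, and a direct computation gives $\A_{f^{-1}}(\bar\gamma)=-\A_{f}(\gamma)$. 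Hence $\inf\A_{f^{-1}}=-\sup\A_{f}$, so the upper bound $Cal(f)\le\sup\A_{f}$ for $f$ is identical to the lower bound $\inf\A_{f^{-1}}\le Cal(f^{-1})$ for $f^{-1}$. Thus it suffices to establish the single inequality $\inf\{\A(\gamma)\}\le Cal(f)$ for every admissible $f$, now without Hutchings' hypothesis $Cal(f)<\theta_0$.

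To obtain this in full generality I would pass through the spectral invariants $c_d(f)$, $d\in\Z_{\ge 0}$, of the periodic Floer homology of $f$ (equivalently the ECH spectral invariants of the Reeb flow produced from $f$ by the open book construction). Two inputs from that theory are needed. The first is that each $c_d(f)$ is realized as the total action of an honest orbit set $\alpha_d=\{(\gamma_i,m_i)\}$ of total degree $d$, and that this total action equals $\sum_i m_i k_i\,\A(\gamma_i)$, where $k_i$ is the period of $\gamma_i$; here one uses the standard identification of the PFH action of a periodic orbit with the sum of the values of the action function $g$ around it. Consequently $c_d(f)/d$ is a convex combination of the mean actions of orbits actually occurring in $\alpha_d$, so that
\[
\inf_{\gamma\in\mathcal{P}(f)}\A(\gamma)\ \le\ \frac{c_d(f)}{d}\ \le\ \sup_{\gamma\in\mathcal{P}(f)}\A(\gamma)
\qquad\text{for every }d.
\]

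The second input is the Weyl law $\lim_{d\to\infty}c_d(f)/d=Cal(f)$, where the normalization $g|_{\partial D^2}=\theta_0$ is chosen precisely so that the linking/rotation correction term is absorbed and the limit is exactly the Calabi invariant. Letting $d\to\infty$ in the displayed inequality then yields $\inf\A(\gamma)\le Cal(f)\le\sup\A(\gamma)$ in one stroke, which is the theorem; the duality of the first paragraph then serves either as a consistency check or as a way to halve the work by proving only the lower half-inequality and invoking $f\mapsto f^{-1}$ for the other.

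The step I expect to be the main obstacle is the Weyl law together with the precise realization statement on the disk and annulus with boundary. The Weyl-law asymptotics for PFH spectral invariants are well understood on closed surfaces, but here the boundary behavior of $f$ (rigid rotation by $\theta_0$) must be encoded correctly, typically by capping off or by working in the mapping-torus/contact model coming from the open book, and one must verify that the boundary normalization of $g$ matches the constant appearing in the asymptotic law and that the minimizing orbit sets do not all escape to $\partial D^2$ in the limit. The degenerate case $Cal(f)=\theta_0$, as well as the case where the infimum or supremum of the mean actions is not attained, is also handled most cleanly in this asymptotic framework, since one never needs a single extremal orbit, only the averaged inequality for each finite $d$.
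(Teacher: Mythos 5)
First, a point about the comparison itself: the paper does not prove Theorem \ref{shangxia} anywhere --- it is imported as an external result of Bramham--Pirnapasov \cite{Bra24} and used as a black box in Section 3. So there is no internal proof to measure your argument against, and your proposal has to stand on its own.

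Your duality step is correct and is worth keeping: from $f^{*}\beta-\beta=dg$ one gets $(f^{-1})^{*}\beta-\beta=d\bracket{-g\circ f^{-1}}$, the boundary normalization goes to $-\theta_{0}$, which is again the boundary rotation number of $f^{-1}$, and $Cal(f^{-1})=-Cal(f)$, $\A_{f^{-1}}(\bar\gamma)=-\A_{f}(\gamma)$; hence the two half-inequalities are exchanged under $f\mapsto f^{-1}$ and it suffices to prove $\inf\{\A(\gamma)\}\leq Cal(f)$ with no hypothesis on $Cal(f)$. The genuine gap is in how you propose to prove that remaining half. The two ``inputs'' you invoke --- spectrality of $c_{d}(f)$ in the form $c_{d}(f)=\sum_{i}m_{i}k_{i}\A(\gamma_{i})$ with $d=\sum_{i}m_{i}k_{i}$, and the Weyl law $c_{d}(f)/d\to Cal(f)$ with ``the linking/rotation correction term absorbed'' by the boundary normalization --- are not off-the-shelf facts for a surface with boundary. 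The PFH spectrality-plus-Weyl-law package is established for closed surfaces; on the disc, the known ECH-type construction (the one actually used in \cite{Hut16}) produces spectral numbers whose orbit-set realizations carry linking-number corrections weighted by $\theta_{0}$, and the sign of those corrections cannot be normalized away: controlling that sign is exactly where the hypothesis $Cal(f)<\theta_{0}$ of Theorem \ref{Hut} enters. Producing spectral invariants on the disc for which your convex-combination identity holds cleanly and whose asymptotics recover the Calabi invariant unconditionally is precisely the mathematical content of \cite{Pir24} and \cite{Bra24}, and it required a new construction (Floer-theoretic spectral invariants for non-compactly supported Hamiltonians on the disc), not a transfer of the closed-surface theory. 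In addition, spectrality only holds for nondegenerate $f$, so your argument also needs a limiting step in which the inf/sup over periodic orbits (whose periods may be unbounded) survives a $C^{\infty}$-small perturbation; this is dealt with in the cited works but is absent from your sketch. In short, what you label in your last paragraph as the ``main obstacle'' to be verified later is not a technical verification: it is the theorem.
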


In \cite{Wei21} Morgan Weiler extended this theorem to the Annulus. Let $\BA=[-1,1]_{p}\times S^1_{q}$ be the annulus with the area form $\omega=dp\wedge dq$, use $\BA_{i}=\{i\}\times S^1, i=\pm1$ to denote the boundary components of $\BA$ respectively. Let $ f $ be an area-preserving, orientation-preserving diffeomorphism on $\BA$ that preserves the boundaries.  We assume that $f$ is rotations near the boundary. That is, there are  $\theta_{-1}, \theta_{1}\in\mathbb{R}$ and a  lift $\tilde{f}$ of $f$ to the universal cover $\tilde{\BA}=[-1,1]\times \R$ of $\BA$,  such that 
\EQN{\tilde{f}(p,q)=\begin{cases}(p, q+\theta_1)& \quad \text{for $p$ sufficiently close to 1,}\\
(p, q+\theta_{-1})& \quad \text{for $p$ sufficiently close to -1.}\end{cases}}

Fix the 1-form $\beta=pdq$, obviously $d\beta =\omega$. Let $g$ be an
action function of $f$ with respect to $\beta$, then $g$ is constant on
both boundaries.  Fix the constant value $g(\BA_{1})=\theta_{1}$, then Weiler state the follwing result:

\begin{conjecture}\label{W}
If the corresponding action function satisfying  
\be
Cal(f) <\max\{g(\BA_1),g(\BA_{-1})\} 
\ee
then the set of periodic orbits of $f$ satisfying 
$$\inf\{\mathcal{A}(\gamma,\beta)|\gamma\in\mathcal{P}(f)\}\leq Cal(f).$$
\end{conjecture}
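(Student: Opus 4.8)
The plan is to reduce the statement on the annulus to Hutchings' disk theorem (Theorem \ref{Hut}) by capping off one boundary component, and then to control the orbits created by the cap. Relabelling the two ends if necessary (replacing $p$ by $-p$ and adjusting the primitive), I may assume $g(\BA_1)=\max\{g(\BA_1),g(\BA_{-1})\}$, and then cap off the \emph{smaller} boundary $\BA_{-1}$: glue a disk $D_\epsilon$ of area $\epsilon$ along $\BA_{-1}$ and extend $f$ across it as the rigid rotation by $\theta_{-1}$, producing an area-preserving disk map $\hat f$ that is a rotation by $\theta_1$ near its boundary $\partial\hat D=\BA_1$. Normalizing the primitive to agree with $\beta=p\,dq$ on $\BA$, the action function $\hat g$ restricts to $g$ on $\BA$, is identically $g(\BA_{-1})$ on $D_\epsilon$, and satisfies $\hat g(\partial\hat D)=g(\BA_1)$; hence $Cal(\hat f)=Cal(f)+\epsilon\,g(\BA_{-1})\to Cal(f)$ as $\epsilon\to 0$.

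The point of capping the smaller boundary is that $g(\partial\hat D)=g(\BA_1)=\max\{g(\BA_1),g(\BA_{-1})\}$, so the hypothesis $Cal(f)<\max\{g(\BA_1),g(\BA_{-1})\}$ becomes exactly $Cal(f)<g(\partial\hat D)$; for $\epsilon$ small this gives $Cal(\hat f)<g(\partial\hat D)$, which is precisely the hypothesis of Theorem \ref{Hut}. Applying that theorem to $\hat f$ produces a periodic orbit $\gamma$ with $\A(\gamma)\le Cal(\hat f)$. Since $\hat f$ preserves $\BA$ and $D_\epsilon$ separately, $\gamma$ lies either in the interior annulus, where it is a genuine orbit of $f$, or in the cap, where $\hat f$ is the rigid rotation by $\theta_{-1}$ and every periodic orbit (the whole disk if $\theta_{-1}\in\Q$, only the center otherwise) has mean action exactly $g(\BA_{-1})$. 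Thus $\inf_{\gamma\in\mathcal{P}(\hat f)}\A(\gamma)=\min\{g(\BA_{-1}),\ \inf_{\gamma\in\mathcal{P}(f)}\A(\gamma)\}$.

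Everything now turns on comparing $g(\BA_{-1})$ with $Cal(f)$. If $g(\BA_{-1})>Cal(f)$, then for $\epsilon$ small all cap orbits have action $>Cal(\hat f)$, so the orbit produced by Theorem \ref{Hut} must be an interior orbit of $f$; letting $\epsilon\to 0$ gives a sequence of $f$-orbits with mean action tending below $Cal(f)$, hence $\inf_{\gamma\in\mathcal{P}(f)}\A(\gamma)\le Cal(f)$. The harder case is $g(\BA_{-1})\le Cal(f)$, where the cap itself realizes the infimum and the reduction returns only the cap orbit. When $\theta_{-1}\in\Q$ this is harmless, since $\BA_{-1}$ then consists of honest periodic orbits of $f$ with $\A=g(\BA_{-1})\le Cal(f)$ and we are finished immediately; the genuine obstacle is an irrational $\theta_{-1}$ with $g(\BA_{-1})\le Cal(f)$, where $\BA_{-1}$ carries no periodic orbit of $f$ and capping yields no interior information.

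I expect this last case to be the crux, and my plan to handle it is to work intrinsically with the embedded contact homology (equivalently periodic Floer homology) of the mapping torus of $f$ on the annulus, as set up in \cite{Wei21}, rather than on the capped disk. There the spectral invariant $c_d$ in degree $d$ is realized by an honest orbit set $\{(\gamma_i,m_i)\}$ of $f$ with $\sum_i m_i p_i=d$ and $c_d=\sum_i m_i p_i\,\A(\gamma_i)$, so $c_d/d$ is a convex combination of the $\A(\gamma_i)$ and therefore $\inf_{\gamma\in\mathcal{P}(f)}\A(\gamma)\le c_d/d$; combined with the volume (Weyl-law) asymptotic $c_d/d\to Cal(f)$ this gives the conclusion with the minimizing orbit automatically an orbit of $f$, bypassing the cap entirely. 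In this picture the hypothesis $Cal(f)<\max\{g(\BA_1),g(\BA_{-1})\}$ reappears as the condition guaranteeing that the relevant spectral class is nonzero and has asymptotic slope $Cal(f)$ rather than being pinned to a boundary value. The main technical work, and the part I expect to be delicate, is establishing this Weyl law for the annulus with the correct normalization at both ends and verifying non-vanishing of the relevant class precisely under the max-hypothesis; continuity of the spectral invariants then lets me pass from rational to irrational $\theta_{-1}$ by approximation.
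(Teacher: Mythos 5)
First, a point of orientation: the paper does \emph{not} prove this statement. It is deliberately labeled a conjecture because the argument in \cite{Wei21} is believed to contain a technical gap, and the paper's own main theorem establishes only partial cases, via a one-parameter family of cappings $i_a$ whose whole purpose is to track how the action function changes when the primitive is changed. So your proposal must stand on its own, and its first, crucial step fails. You assert that after gluing a disk $D_\epsilon$ of area $\epsilon$ along $\BA_{-1}$ you can choose the primitive on the capped disk "to agree with $\beta=p\,dq$ on $\BA$", so that $\hat g$ restricts to $g$. This is impossible: by Stokes' theorem, any primitive $\hat\beta$ of the area form on the capped disk has period $\pm\epsilon$ (the cap area) around the capping circle $\BA_{-1}$, whereas $\oint_{\BA_{-1}} p\,dq=-1$. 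Hence $\hat\beta|_{\BA}-\beta$ is a closed $1$-form with \emph{nontrivial} class in $H^1(\BA,\R)$, and by the paper's Proposition 2.9 the mean action of an $f$-orbit computed from $\hat g$ differs from $\A(\gamma,\beta)$ by a term proportional to the rotation number $\rho(\gamma)$ plus a constant. Consequently your identities $Cal(\hat f)=Cal(f)+\epsilon\,g(\BA_{-1})$ and "cap orbits have mean action exactly $g(\BA_{-1})$" are both false (compare the paper's formula $g_a(0,0)=\frac{F}{1+a}+\frac{a\theta_0}{1+a}$, where the flux $F$, not the boundary action value, governs the cap), and the inequality produced by Theorem \ref{Hut} is contaminated by $\rho$-terms even in your "easy case" $g(\BA_{-1})>Cal(f)$. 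This is exactly the obstruction that Weiler's appendix identifies for the naive capping argument, and it is why the paper's main theorem, which does the capping correctly, only yields conclusions of the form $\A(\gamma)\geq\frac{1}{1+a}Cal(f)+\frac{a}{1+a}(2F-\theta_0)+a(\theta_0-\rho(\gamma))$ rather than Conjecture \ref{W} itself.

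Your fallback for the hard case (irrational $\theta_{-1}$ with $g(\BA_{-1})\le Cal(f)$) is to run ECH/PFH spectral invariants and a Weyl law directly on the mapping torus of the annulus map. But that is precisely the strategy of \cite{Wei21}, whose gap is the very reason the statement is stated here as a conjecture rather than a theorem; and you do not carry it out — you explicitly defer the Weyl law and the non-vanishing of the relevant class as "the part I expect to be delicate". So neither branch of your argument closes: the elementary branch rests on an impossible normalization of the primitive, and the homological branch is a restatement of the open problem.
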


Notice we state this result as a conjecture because the proof in \cite{Wei21} has some technique gap now. We want to prove a theorem on Annulus from the previous results on Disk. In \cite{Wei21} Appendix part Weiler showed that direct apply theorem \ref{Hut} could only cover a part of the case in which conjecture \ref{W} could apply. We will extend theorem\ref{Hut} and prove some case in conjecture \ref{W}. In order to state our main theorem we need a new variable namely the rotation vector or rotation number of a periodic orbit $\gamma$, denote as $\rho(\gamma)$, the pricise definition will introduce in next section.

\begin{theorem}(main result)

Let $\BA=[0,1]\times S^{1}$ be the annulus, with standard area form $\omega_0$ and fixed primitive form $\beta_0$, let $f$ be an area-preserving, orientation preserving diffeomorphism on $\BA$ that is pure rotations of $\theta_i$ near the boudary $A_{i}, i=0,1$. Then we have 
\begin{itemize}

\item If $F< Cal(f)$, then there exists $\gamma_{0}$, $\mathcal{A}(\gamma_{0})\geq Cal(f)$;
\item If $\theta_{0}< Cal(f)$, then there exists $\gamma_{1}$, $\mathcal{A}(\gamma_{1})\geq \frac{1}{2}(Cal(f)+F+\theta_{0})-\rho(\gamma_{a})$;
\item If $\theta_{0} \leq F < Cal(f) $, then  there exists $\gamma_{\infty}$, $\rho(\gamma_{\infty})\geq\theta_{0}$; and if $\rho(\gamma_{\infty})=\theta_{0}$ then $\mathcal{A}(\gamma_{\infty})\geq 2F-\theta_{0}$.

\end{itemize}

\end{theorem}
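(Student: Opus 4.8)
The plan is to reduce the annulus statement to the disk results Theorem~\ref{Hut} and Theorem~\ref{shangxia} by capping off a boundary circle, and to transport the resulting disk conclusions back to $\BA$ using the way the action depends on the chosen primitive. The first step I would isolate is the \emph{primitive-dependence formula}: if $\lambda$ is a closed $1$-form on $\BA$ with $\int_{S^1}\lambda=1$, then for every periodic orbit $\gamma$ one has $\A(\gamma,\beta_0+s\lambda)=\A(\gamma,\beta_0)+s\,\rho(\gamma)$, so that tilting the primitive shifts the mean action linearly in the rotation number $\rho(\gamma)$. This identity is the bridge between the two invariants advertised in the introduction, and it is the source of the $\rho$-terms and of the reflected value $2F-\theta_0$ in the statement.

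Next I would build the capping dictionary. Writing $\beta_0=p\,dq$ on $\BA=[0,1]\times S^1$, the primitive already vanishes on $A_0=\{0\}\times S^1$, so it extends without modification over a disk $D_0$ glued along $A_0$; extending $f$ across $D_0$ as the rigid rotation by $\theta_0$ yields an area-preserving disk map $\hat f$ that is rotation by $\theta_1$ near its boundary and whose action function $\hat g$ restricts to $g$ on $\BA$ and to the constant $F$ on $D_0$, where $F$ is the boundary value of $g$ along $A_0$. A short computation expresses $Cal(\hat f)$ as a weighted average of $Cal(f)$ and $F$ governed by the cap area, so that $Cal(\hat f)\to Cal(f)$ as the cap shrinks, while every orbit created inside the cap has mean action exactly $F$ and rotation number exactly $\theta_0$. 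Capping the opposite end $A_1$ instead forces the preliminary tilt $\beta_0\mapsto\beta_0-dq$, which by the formula above costs a shift $-\rho(\gamma)$ in every action; this is the asymmetry that makes $A_0$ and $\theta_0$ play a distinguished role.

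With this dictionary the three bullets come from feeding $\hat f$ into the disk theorems and reading off which orbit is produced. For the first bullet I apply the upper inequality $Cal(\hat f)\le\sup\A$ of Theorem~\ref{shangxia}; since $F<Cal(f)$ forces $F<Cal(\hat f)$, the extremal orbit cannot be a cap orbit (those have action $F$), hence it is a genuine orbit of $f$, and letting the cap shrink produces $\gamma_0$ with $\A(\gamma_0)\ge Cal(f)$. For the second bullet I would symmetrise the $A_0$- and $A_1$-cappings (equivalently cap both ends and use the averaged Calabi invariant), which manufactures the term $\frac{1}{2}(Cal(f)+F+\theta_0)$, and then correct by the primitive shift $\rho(\gamma_a)$ coming from the tilt needed at $A_1$; the hypothesis $\theta_0<Cal(f)$ is exactly what makes Theorem~\ref{Hut} applicable to the capped map. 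The third bullet is the asymptotic case, and here I would run the $A_0$-capping through the finer spectral input behind Theorem~\ref{Hut} (the tower of periodic-orbit sets detected by the $U$-map), producing a family of orbits whose rotation numbers are pushed upward; the condition $\theta_0\le F<Cal(f)$ localises a limit orbit $\gamma_\infty$ near $A_0$ with $\rho(\gamma_\infty)\ge\theta_0$, and in the borderline case $\rho(\gamma_\infty)=\theta_0$ the action computed in the $A_0$-adapted primitive equals its reflection about $F$, giving $\A(\gamma_\infty)\ge 2F-\theta_0$.

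The main obstacle throughout is that the disk theorems are blind to the $S^1$-winding: after $A_0$ is capped, $\hat f$ retains no memory of the rotation number of the orbit it returns, so the crux is to certify that each orbit extracted from the capped disk is a genuine orbit of $f$ with the claimed winding rather than a cap artifact. I expect to control this by combining a deformation in the cap area with the primitive-dependence formula — reading $\rho(\gamma)$ off the slope of the mean action as the primitive is tilted — and by exploiting the strict inequalities $F<Cal(f)$, $\theta_0<Cal(f)$ and $\theta_0\le F$ to keep the selected orbit separated from the cap value $F$ and hence in the interior. The limiting third bullet, where $\gamma_\infty$ is produced only asymptotically, is the most delicate point, and is precisely where the argument of \cite{Wei21} was incomplete.
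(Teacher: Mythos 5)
Your global strategy --- cap off the inner boundary by a rigid rotation, feed the capped map into the disk theorems, and carry the conclusion back to $\BA$ using the fact that tilting the primitive by $s\,dy$ shifts every mean action by $s\rho(\gamma)$ --- is the paper's strategy, and your treatment of the first bullet essentially coincides with the paper's. What your proposal misses is the paper's unifying device: a \emph{single one-parameter family} of cappings $i_a:\BA\to D$, $a\in[0,\infty)$, in which the cap has relative area $\frac{a}{1+a}$ and $i_a^{*}\beta_D=\frac{1}{1+a}(\beta_0+a\,dy)$. For the extended map $f_a$ one computes $g_a(0,0)=\frac{F+a\theta_0}{1+a}$ and $Cal(f_a)=\frac{Cal(f)+2aF+a^2\theta_0}{(1+a)^2}$; under the single hypothesis $g_a(0,0)<Cal(f_a)$, equivalently $(1-a)F+a\theta_0<Cal(f)$ (this, not the value $F$, is what rules out cap orbits --- their action is $\frac{F+a\theta_0}{1+a}$, which equals $F$ only at $a=0$), Theorem \ref{shangxia} yields an orbit of $f$ with $\A(f,\gamma)\geq\frac{1}{1+a}Cal(f)+\frac{a}{1+a}(2F-\theta_0)+a\bigl(\theta_0-\rho(\gamma)\bigr)$. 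The three bullets are exactly the cases $a=0$, $a=1$, $a\to\infty$ of this one inequality. Your second and third bullets replace these specializations by ad hoc arguments, and both have genuine gaps.

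For the second bullet, ``symmetrise the $A_0$- and $A_1$-cappings (equivalently cap both ends and use the averaged Calabi invariant)'' does not work. Capping both boundary circles of $\BA$ produces a sphere, not a disk, so neither Theorem \ref{Hut} nor Theorem \ref{shangxia} applies to it; and if instead you perform the two cappings separately, each produces its \emph{own} periodic orbit, and two inequalities about two different orbits cannot be averaged into the single-orbit inequality $\A(\gamma_1)\geq\frac{1}{2}(\cdots)-\rho(\gamma_1)$ that the theorem asserts. What actually manufactures the factor $\frac{1}{2}$ and the $-\rho$ correction is the $a=1$ member of the family above: one cap on $A_0$ alone, chosen with area equal to that of the annulus (whence the weight $\frac{1}{2}$ in all area integrals), with pulled-back primitive $\frac{1}{2}(\beta_0+dy)$ (whence the term $-\rho(\gamma)$); and one checks that $g_1(0,0)<Cal(f_1)$ reduces literally to $\theta_0<Cal(f)$.

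For the third bullet, appealing to ``the finer spectral input behind Theorem \ref{Hut} (the tower of periodic-orbit sets detected by the $U$-map)'' re-opens the ECH black box and is a pointer rather than an argument: nothing in your outline extracts the lower bound $\rho(\gamma_\infty)\geq\theta_0$ from that machinery. The paper's route is elementary and stays inside the same family: if $\theta_0\leq F<Cal(f)$, then $(1-a)F+a\theta_0=F+a(\theta_0-F)<Cal(f)$ for \emph{every} $a\geq0$, so the inequality above produces orbits $\gamma_a$ for all $a$; since $\A$ and $\rho$ are bounded on $\mathcal{P}(f)$ while the right-hand side contains the term $a(\theta_0-\rho(\gamma_a))$, one must have $\rho(\gamma_a)\geq\theta_0-O(1/a)$, and passing to a limit as $a\to\infty$ gives $\gamma_\infty$ with $\rho(\gamma_\infty)\geq\theta_0$, the borderline bound $\A(\gamma_\infty)\geq 2F-\theta_0$ being the limit of the right-hand side when $\rho=\theta_0$. (This limiting step --- extracting an actual periodic orbit rather than just an invariant measure --- is the delicate point you correctly flag, but your proposal offers no mechanism for it either.)
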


This article is orgainzed as follow. In Section 2 we recall the basic definitions of action functions and rotation numbers for invariat measures of area-preserving surface diffeomorphism, and discuss their relationship in detail. In Section 3 we analyze the pricise change of the action function in an augmented system, prove the main theorem, and provide an insightful remark on the existence of periodic orbits based on the information of actions and rotations. in Section 4, we present some fundamental examples of area-preserving surface diffeomorphism, for which we could plot the bivariate diagram of invariant measures.  

\section{Action function and rotation numbers}

In this section, we give a general introduction to the action and
rotation vectors of symplectic diffeomorphisms. 

\subsection{Action function}

Let $M$ be a $2n$-dimensional symplectic manifold with a symplectic form
$\omega$ and non-empty boundary, let $f$ be an exact symplectic
diffeomorphism on $M$, i.e., $f$ is isotopic to identity, and for any
1-form $\beta$ such that $\omega = d \beta$, we have
$$ f^{*}\beta-\beta=dg $$

is exact. The real-valued function $g$ on $M$ is called the {\it action function} of $f$.

Clearly, the choice of $g$ is not unique, it depends on two factors, the first one is the integration constant. This can be fixed by assigning $g$ a particular value at a special point, for example on the boundary.  Hutchings and Weiler assumed the action function value on the circle boundary so that the value aggrees with the circle rotation number of the diffeomorphism.

The second factor is the choice of $\beta$ we made. For fixed $\beta$, we define the {\it mean action} of $f$ as: 
\begin{definition}
The mean action of $f$ is defined by
$$\mathcal{A}(f,\beta)=\frac{\int_{M}g\omega^{n}}{\int_{M}\omega^{n}}$$
we may rescale $\omega$ so that $\int_{M}\omega^{n}=1$. 
\end{definition}

If $M$ is the unit disk $D^2$, then we will see later that after fix some boundary condition, the choice of the 1-form makes no difference, the mean action is called the {\it Calabi invariant}.


The definition of action with respect to the volume form can be extended to all $f$-invariant finite measures on $M$. More precisely, let $\mathcal{M}_{f}$ be the set of all $f$-invariant finite measures on $M$, For any $\mu\in\mathcal{M}_{f}$ and a fixed closed 1-form $\beta$, the mean action of $\mu$ is defined as 
$$\mathcal{A}(f,\mu,\beta) = \frac{\int_{M} g d\mu}{\int_{M}d\mu}.$$
Sometimes we omit $f$ while there is no danger of confusing and we want to emphasize the difference in the invariant measure.

In particular, for any periodic orbit $\gamma = \{p_{0}, p_{1},\ldots, p_{k} = p_{0}\}$, where $f^{i}(p_{0}) = p_{i}$, for $ i = 1, 2,..., k $, the corresponding probability invariant measure is
$$\mu_{\gamma} = \frac{1}{k}(\delta_{p_{0}} +\delta_{p_{1}} +\ldots + \delta_{p_{k-1}}),$$
and the mean action on $\gamma$ is
$$\mathcal{A}(\gamma,\beta)=\mathcal{A}(\mu_{\gamma},\beta) = \frac{1}{k}(g(p_{0}) + g(p_{1}) + \ldots + g(p_{k-1})).$$

To see the dependence of the mean action on the closed 1-form, notice that if
$$\omega = d\beta = d\beta', $$ 
then $\beta'-\beta$ is a closed form. We will see later the cohomology class of this closed form plays an essential role. 

\begin{lemma} If  $[\beta'-\beta] = 0\in H^{1}(M, \R)$ then this two
  forms determinate the same mean action, up to a constant.
\end{lemma}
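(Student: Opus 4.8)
The plan is to unwind the definitions and reduce the claim to a statement about exact forms. Suppose $\beta' - \beta$ is closed with $[\beta'-\beta] = 0 \in H^1(M,\R)$. By definition of the de Rham cohomology class being zero, there exists a smooth function $h$ on $M$ with $\beta' - \beta = dh$. Let $g$ be an action function for $\beta$, so that $f^*\beta - \beta = dg$. I would then compute the action function $g'$ associated with $\beta'$ and show that $g' - g$ differs from $h \circ f - h$ only by a constant.

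First I would write down the defining equation for $g'$, namely $f^*\beta' - \beta' = dg'$, and substitute $\beta' = \beta + dh$. Using that $f^*$ commutes with $d$, we get $f^*\beta' = f^*\beta + d(h\circ f)$, hence
\EQN{
f^*\beta' - \beta' = (f^*\beta - \beta) + d(h\circ f) - dh = dg + d(h\circ f - h).
}
Therefore $dg' = d\bigl(g + (h\circ f - h)\bigr)$, which forces $g' = g + (h\circ f - h) + c$ for some constant $c$ (on each connected component), since $M$ is connected. Next I would integrate against an arbitrary $f$-invariant finite measure $\mu$: because $\mu$ is $f$-invariant, $\int_M (h\circ f)\, d\mu = \int_M h\, d\mu$, so the term $h\circ f - h$ integrates to zero. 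Consequently $\int_M g'\, d\mu = \int_M g\, d\mu + c\int_M d\mu$, and dividing by $\int_M d\mu$ yields $\mathcal{A}(f,\mu,\beta') = \mathcal{A}(f,\mu,\beta) + c$. Specializing to the volume measure $\omega^n$ (which is $f$-invariant since $f$ is symplectic) gives the stated conclusion that the two forms determine the same mean action up to the constant $c$.

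The main point to be careful about is the step where $\int_M (h\circ f)\,d\mu = \int_M h\,d\mu$: this is exactly the defining property of an $f$-invariant measure, so it is the conceptual heart of the argument even though it is not computationally hard. The only other subtlety is the passage from $dg' = d(g + h\circ f - h)$ to equality up to a constant, which requires $M$ to be connected (otherwise one gets a locally constant function, i.e.\ one constant per component); I would note this hypothesis explicitly. I expect no serious obstacle here, since the statement is essentially a cohomological bookkeeping lemma; the real work in the paper lies in the later theorems where the cohomology class $[\beta'-\beta]$ is nonzero and genuinely affects the rotation numbers.
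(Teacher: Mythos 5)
Your proof is correct and complete: writing $\beta'-\beta = dh$, deducing $g' = g + (h\circ f - h) + c$ on connected $M$, and killing the coboundary term $h\circ f - h$ by $f$-invariance of the measure is exactly the standard argument, and you correctly identify measure-invariance as the key step and connectedness as the only hypothesis needed. The paper itself gives no inline proof, deferring instead to \cite{Deng21}, where essentially this same bookkeeping argument appears, so there is nothing to contrast.
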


One can see \cite{Deng21}  for the proof.  The case that $[\beta'-\beta]\neq0$ is more interesting and is related to the rotation vector of $f$-invariant finite measures, we will give more datiles in next section.  

The following example assert the geometry meaing of action function.

\begin{example}
Let $(M,\omega)$ be a surface, $x_{0}$ be a fixed point of $f$, and let $\gamma$ be any smooth curve that from $x_{0}$ to $x$. Then we have
$$\int_{\gamma}f^{*}\beta-\beta=\int_{f(\gamma)-\gamma}\beta=g(x)-g(x_{0}),$$
it does not depend on the choice of $\gamma$ we made.

If both $x_{0}$ and $x$ are fixed points of f, then 
$$g(x)-g(x_{0})=\int_{U}\omega,$$ 
where $U$ is a disk such that $$\partial D=f(\gamma)-\gamma.$$
\end{example}

The following lemma and example assert that if we perturb a differmorphism with a diffeomorphism that admits nontrivial mean action, then we could get new periodic orbits according the conclusion of previous theorems.

\begin{lemma} Let $f_{1},f_{2}$ be two exact symplectic diffeomorphisms on $(M,\omega)$. Let $g_{1},g_{2},g_{12}$ be action functions for $f_{1},f_{2}$ and $f_{2}\circ f_{1}$ respectively. Suppose there is a point $x_{0}\in M$ such that 
\begin{equation*}
g_{12}(x_{0})=g_{1}(x_{0})+g_{2}(f_{1}(x_{0}))\label{1},
\end{equation*}
then $$\mathcal{A}(f_{2}\circ f_{1})=\mathcal{A}(f_{2})+\mathcal{A}(f_{1}).$$ 
\end{lemma}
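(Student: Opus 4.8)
The plan is to first identify the action function of the composite $f_2 \circ f_1$ in terms of $g_1$ and $g_2$, and then integrate against the volume form. The starting point is the cocycle behaviour of the pullback. Writing $f_1^{*}\beta - \beta = dg_1$, $f_2^{*}\beta - \beta = dg_2$, and $(f_2 \circ f_1)^{*}\beta - \beta = dg_{12}$, I would compute
$$(f_2 \circ f_1)^{*}\beta - \beta = f_1^{*}(f_2^{*}\beta) - \beta = f_1^{*}(\beta + dg_2) - \beta = (f_1^{*}\beta - \beta) + d(g_2 \circ f_1) = d\bigl(g_1 + g_2 \circ f_1\bigr),$$
using the naturality identity $f_1^{*} dg_2 = d(g_2 \circ f_1)$ (pullback commutes with $d$, and the pullback of a function is precomposition). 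Hence $dg_{12} = d(g_1 + g_2 \circ f_1)$.

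Next, since $M$ is connected, the two primitives of the same exact form differ by a constant: $g_{12} = g_1 + g_2 \circ f_1 + c$. Evaluating at the given point $x_0$ and invoking the hypothesis $g_{12}(x_0) = g_1(x_0) + g_2(f_1(x_0))$ forces $c = 0$, so the identity $g_{12} = g_1 + g_2 \circ f_1$ holds on all of $M$.

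Finally I would integrate. Normalizing $\int_M \omega^n = 1$, we get
$$\mathcal{A}(f_2 \circ f_1) = \int_M g_{12}\,\omega^n = \int_M g_1\,\omega^n + \int_M (g_2 \circ f_1)\,\omega^n.$$
The first term is $\mathcal{A}(f_1)$ by definition. For the second, I use that $f_1$ is symplectic, hence $f_1^{*}\omega^n = \omega^n$, and that $f_1$ is an orientation-preserving self-diffeomorphism of $M$; the change-of-variables formula then gives $\int_M (g_2 \circ f_1)\,\omega^n = \int_M f_1^{*}(g_2\,\omega^n) = \int_M g_2\,\omega^n = \mathcal{A}(f_2)$. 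Combining the two terms yields $\mathcal{A}(f_2 \circ f_1) = \mathcal{A}(f_1) + \mathcal{A}(f_2)$.

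The computation is otherwise routine; the one genuinely load-bearing step is the normalization of the integration constant. Without the hypothesis at $x_0$ the two action functions agree only up to an additive constant $c$, and since $\int_M \omega^n = 1$ this constant shifts the mean action by exactly $c$, breaking additivity. The hypothesis is precisely the condition that pins down mutually compatible base-point normalizations of the three action functions so that $c$ vanishes; checking that such a common normalization is consistent with the boundary conventions used elsewhere in the paper is the only point that requires care.
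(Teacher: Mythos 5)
Your proof is correct and complete. Note that the paper itself gives no argument for this lemma --- it only remarks that the hypothesis at $x_0$ can be arranged by adding constants and defers the proof to the cited reference [Deng21] --- so your write-up actually supplies the missing details: the cocycle identity $(f_2\circ f_1)^{*}\beta-\beta=d\bigl(g_1+g_2\circ f_1\bigr)$, the normalization of the additive constant via the hypothesis at $x_0$ (correctly flagged as the load-bearing step, and requiring connectedness of $M$), and the change-of-variables step $\int_M (g_2\circ f_1)\,\omega^n=\int_M g_2\,\omega^n$ using $f_1^{*}\omega^n=\omega^n$. This is the standard argument and is exactly what the citation is standing in for; no gaps.
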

Notice the requirement about the value of $g$ can be satisfied by adding some constant on each action function. The proof can be found in \cite{Deng21}.

\begin{example}
Assume $y_{0}\in\mathbb{R}-\mathbb{Q}$, let $f:\mathbb{A}\rightarrow\mathbb{A}, (x,y)\mapsto(x,y+y_{0})$ be an irritional rotation on annulus.
Then for the standard area form $\omega$ and the 1-form $\beta=ydx$, the action function of f is constant $y_{0}$ everywhere, and $\mathcal{A}(f)=y_{0}$. Obviously $f$ admits no periodic orbit on $\BA$. This is a sharp count example of the theorem.

Consider the Hamiltonian diffeomorphism $f'$ on $\mathbb{A}$ generated by
$$H(x,y)= \left\{ 
\begin{aligned} &\ y_{0}-x^2-y^2,  &\ x^2+y^2<y_{0}   \\
                      &\ \ \ 0,  &\ x^2+y^2>y_{0}+\varepsilon \end{aligned} \right.$$
and smooth and monotone between.  $f'$ is a clockwise rotation in the area $x^2+y^2<y_{0}$ and have negative mean action, hence  $\mathcal{A}(f\circ f')<\mathcal{A}(f)=y_{0}$ and $f\circ f'$ has periodic points by our main theorem.
\end{example}

\subsection{Rotation numbers}

Let $f$ be an exact symplectic diffeomorphism on a $2n$ dimensional
symplectic manifold $M$, then the measures support on periodic orbits of
$f$ are $f$-invariant. For a periodic orbit
$\gamma = \{p_0, p_1, \ldots, p_k=p_0\}$, let $\tilde{\gamma}(t)$ be a
closed curve on $M$ obtained by isotopy, with
$\tilde{\gamma}(i) = p_i$, for $i = 1, 2,\ldots, k$.  We remark here
we have made a choice of the isotropy of $f$, i.e., a path $f^{t}$ of
area-preserving diffeomorphisms such that $f^{0}=id$ and $f^{1}=f$,
$\tilde{\gamma}$ is the curve on $M$ defined by
$\tilde{\gamma}(t,x)=f^{t}(x)$. The following definitions do not
depend on the choice of isotropy.
\begin{definition}
The rotation vector of $\gamma$ is the homology class of $\tilde{\gamma}$ divided by its period,
$$\rho(\gamma) = \frac{1}{k} [\tilde{\gamma} ] \in H_1(M, \R).$$
\end{definition}

To generalize the concept of rotation vector, for any closed 1-form
$\alpha$ on $M$, we define the bi-linear form $\langle\cdot, \cdot\rangle^*$ by 
$$\langle\gamma, \alpha\rangle^* = \frac{1}{k}\oint_{\tilde{\gamma}} \alpha.$$
It is easy to see that the above pairing depends only on the homology
class of $\tilde{\gamma}$, in $H_1(M, \R)$, and the cohomology class
of $\alpha$, in $H^1(M, \R)$. This paring equivalently defines, for
any periodic orbit $\gamma$, the rotation vector
$\rho(\gamma) \in H_1(M, \R)$, by the following equation:
$$\langle\rho(\gamma), [\alpha]\rangle = \langle\gamma, \alpha\rangle^* =
\frac{1}{k}\oint_{\tilde{\gamma}} \alpha,$$ where the first pairing is
the canonical pairing between homology and cohomology of the manifold
$M$.

This definition of rotation vector can be naturally extended to $f$-invariant finite
measures in $\mathcal{M}_f$. Fix a closed 1-form $\alpha$, for any point $x\in M$, let $\tilde{\gamma}(t, x), t\in\R$ be the curve in $M$ by connecting
orbit of $x$ by the isotopy in such a way that $\tilde{\gamma}(i, x) =
f^i(x)$. Define, if exists,
$$\rho(x, \alpha) = \lim_{T \rightarrow\infty} \frac{1}{T} \int_{\tilde{\gamma}(t, x): t \in [0,T]} \alpha.$$
It is easy to see that $\rho(x, \alpha) = \rho(x, \alpha')$, if
$[\alpha] = [\alpha'] \in H^1(M, \R)$.  This is because if
$\alpha$ is exact, $\alpha = dS$ for some 
function $S: M \rightarrow \R$, then 
$$\rho(x, dS) =\lim_{T \rightarrow\infty} \frac{1}{T} \int_{\gamma(t, x): t \in [0,T]} dS = \lim_{T \rightarrow \infty} \frac{1}{T} (S(\gamma(T)) - S(x)) = 0.$$

For any invariant finite measure $\mu \in \mathcal{M}_f$, by Birkhoff Ergodic Theorem, for any fixed $\alpha$, for $\mu-a.e. \, x \in M$, the limit exists and $\rho(x, \alpha)$ is well-defined. Therefore, it is also well-defined for a finite set of basis vectors $[\alpha]$ in $H^1(M, \R)$. Hence, for $\mu -a.e. \, x \in M$,
$\rho(x, \alpha)$ is well-defined for {\em all}\/ closed 1-forms $\alpha$. Moreover, by Birkhoff Ergodic Theorem,
$$\int \rho(x, \alpha) d\mu = \int \left(\int_{\tilde{\gamma}(t, x): t \in [0, 1]}
  \alpha \right) d \mu$$
The above equation is linear in both $\alpha$ and $\mu$, it depends only on the cohomology class of $\alpha$, therefore, it defines a pairing between $\mu$ and cohomology elements in $H^1(M, \R)$. This
pairing defines the rotation vector  $\rho(\mu) \in H_1(M, \R)$.
\begin{definition}
For any $\mu \in \M_f$ and any closed 1-form $\alpha$, $[\alpha] \in
H^1(M, \R)$, the {\em rotation vector} of $\mu$, $\rho(\mu) \in H_1(M, \R)$ is defined
by the following equation  
$$\langle\rho(\mu), [\alpha]\rangle = \int \left(\int_{\tilde{\gamma}(t, x): t \in [0, 1]}
  \alpha \right)
d \mu \in \R$$
where the left-hand side is
the canonical pairing between homology and cohomology of the manifold
$M$.
\end{definition}

As an example, if $f$ is a Hamiltonian diffeomorphism, then $\omega^n \in \mathcal{M}_f$ and if $M$ is compact without boundary, then $\rho(\omega^n) = 0$, where $\omega^n$ denote the f-invariant measure determined by the volume form $\omega^n$.  This is not true in general for non-Hamiltonian symplectic diffeomorphisms. An easy example is 
$$T_{(a, b)}: \T^2 \rightarrow \T^2,\ \ T_{(a, b)} (x, \, y) = (x + a, \, y + b) \mbox{ mod } \Z^2 (a, b) \notin \Z^2$$ This is also not true in general for Hamiltonian diffeomorphisms on manifold with boundaries, for example the annulus.

Recall that the action we defined depends on the choice of 1-form
$\beta$. It turns out that this dependence is closely related to the rotation vector we just defined. 
\begin{proposition}
Let $\alpha$ be a non-trivial closed 1-form on $M$, let $\tilde{\beta} = \beta + \alpha$ satisfying $d\tilde{\beta} = d\beta = \omega$. Let $\tilde{g}$ and $g$ be action functions defined by $\tilde{\beta}$ and $\beta$ respectively. To fix the integration constants, pick a point $x_0 \in M$,  fix the value of $g(x_{0})=\tilde{g}(x_{0})=0$, then we have
$$\mathcal{A}(\mu,\tilde{\beta}) - \mathcal{A}(\mu,\beta) = \langle\rho(\mu), \alpha\rangle - C_{x_0, \alpha}$$
where $C_{x_0, \alpha}$ is a constant depending on $x_0$ and $\alpha$, 
but not on $\mu$.
\end{proposition}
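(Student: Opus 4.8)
The plan is to reduce the statement to a pointwise comparison of the two action functions and then to recognize the comparison function as exactly the integrand computing the rotation pairing. First I would subtract the two defining relations $f^{*}\beta-\beta=dg$ and $f^{*}\tilde\beta-\tilde\beta=d\tilde g$. Since $\tilde\beta-\beta=\alpha$, this yields $d(\tilde g-g)=f^{*}\alpha-\alpha$ (note this is consistent with exactness: $f$ is isotopic to the identity, so $f^{*}$ acts trivially on $H^{1}(M,\R)$ and $f^{*}\alpha-\alpha$ is exact). Writing $h=\tilde g-g$, the normalization $g(x_{0})=\tilde g(x_{0})=0$ forces $h(x_{0})=0$. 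Assuming $\mu$ is a probability measure (otherwise divide by $\int_{M}d\mu$), the left-hand side becomes $\mathcal{A}(\mu,\tilde\beta)-\mathcal{A}(\mu,\beta)=\int_{M}h\,d\mu$, so everything reduces to evaluating this integral.

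The heart of the argument is to identify $h$ in closed form. Let $\tilde\gamma(t,x)=f^{t}(x)$ be the isotopy path from $x$ to $f(x)$ used to define the rotation vector, and introduce the genuinely single-valued function $\Phi(x)=\int_{\tilde\gamma(t,x):\,t\in[0,1]}\alpha$. The key lemma I would prove is $d\Phi=f^{*}\alpha-\alpha$. To see this, vary the basepoint along a short path $x_{s}$ with $\dot x_{0}=v$ and apply Stokes' theorem to $d\alpha=0$ over the tube $(s,t)\mapsto f^{t}(x_{s})$; the four boundary edges give, to first order, the bottom edge $\alpha_{x_{0}}(v)$, the top edge $(f^{*}\alpha)_{x_{0}}(v)$, and the two vertical edges $\Phi(x_{s})$ and $\Phi(x_{0})$, and rearranging yields $d\Phi_{x_{0}}(v)=(f^{*}\alpha-\alpha)_{x_{0}}(v)$. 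I expect this differential computation, together with checking the orientation/sign conventions on the tube, to be the main obstacle; it is also where the closedness of $\alpha$ is essential.

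Granting the lemma, $d(h-\Phi)=0$, so on the connected manifold $M$ the function $h-\Phi$ is constant; evaluating at $x_{0}$ and using $h(x_{0})=0$ gives $h=\Phi-\Phi(x_{0})$. Integrating against $\mu$ then produces $\int_{M}h\,d\mu=\int_{M}\Phi\,d\mu-\Phi(x_{0})$. Finally I would invoke the definition of the rotation vector directly: since $\Phi(x)=\int_{\tilde\gamma(t,x):\,t\in[0,1]}\alpha$, the defining equation gives $\int_{M}\Phi\,d\mu=\langle\rho(\mu),[\alpha]\rangle$. Combining these identities yields
\[
\mathcal{A}(\mu,\tilde\beta)-\mathcal{A}(\mu,\beta)=\langle\rho(\mu),[\alpha]\rangle-\Phi(x_{0}),
\]
so the claim holds with $C_{x_{0},\alpha}=\Phi(x_{0})=\int_{\tilde\gamma(t,x_{0}):\,t\in[0,1]}\alpha$, which manifestly depends only on $x_{0}$ and $\alpha$ (through the fixed isotopy) and not on $\mu$. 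As a consistency check, the left-hand side and $\langle\rho(\mu),[\alpha]\rangle$ are both independent of the choice of isotopy, which forces $\Phi(x_{0})$ to be as well; this is the only place where some care about isotopy-independence is warranted.
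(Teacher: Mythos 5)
Your proof is correct and follows essentially the same route as the paper: the paper writes $\tilde g(x)-g(x)=\int_{x_0}^{x}(f^*\alpha-\alpha)$ and converts this by Stokes' theorem on the isotopy cylinder into $\int_{\tilde\gamma(t,x)}\alpha-\int_{\tilde\gamma(t,x_0)}\alpha$, which is exactly your identity $h=\Phi-\Phi(x_0)$, only obtained there in integral form rather than via your infinitesimal version $d\Phi=f^*\alpha-\alpha$ plus connectedness. In both arguments the final step is integrating against $\mu$ and invoking the definition of the rotation vector, with the same constant $C_{x_0,\alpha}=\Phi(x_0)=\int_{\tilde\gamma(t,x_0):\,t\in[0,1]}\alpha$.
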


\begin{proof}
By definition, for any $x$, ${g}(x)  = \int_{x_0}^x ( f^*({\beta}) - \beta)$
and likewise $$\tilde{g}(x)  = \int_{x_0}^x ( f^*(\tilde{\beta}) -
\tilde{\beta)} = g(x) + \int_{x_0}^x (f^* \alpha - \alpha).$$
All the integrals are path independent.  To see this notice if we pick two different curves $l_{1}$ and $l_2$ that links from $x_0$ to $x$, then 
\be \int_{l_1}f^{*}\beta-\beta-\int_{l_2}f^{*}\beta-\beta=\oint f^{*}\beta-\beta=\int_{U}d(f^{*}\beta-\beta) =\int_{U}ddg=0. \nn \ee
where the circle intergration is oriented by $l_{1}-l_{2}$, and $U$ is the closed area bounded by $l_{1}$ and $l_2$. $\tilde{g}$ case is similar.

For any invariant finite measure $\mu$, by Stokes' theorem, we have 

\begin{eqnarray*}
\mathcal{A}(\mu,\tilde{\beta}) &=& \int_M \tilde{g} d\mu = \int_M g d\mu + \int_M \left(
\int_{x_0}^x (f^* \alpha - \alpha) \right) d\mu  \\
&=& \mathcal{A}(\mu,\beta) + \int_M \left( \int_{\gamma(t, x): t \in [0, 1]} \alpha
\right) d\mu - \int_M \left( \int_{\gamma(t, x_0): t \in [0, 1]} \alpha
\right) d\mu  \\
&=& \mathcal{A}(\mu,\beta) + \langle\rho(\mu), \alpha\rangle - C_{x_0, \alpha}.
\end{eqnarray*}

\end{proof}

The constant $C_{x_0, \alpha}$ is zero, if our reference point $x_0$ is a contractible fixed point.

\section{Main result}

Now we consider the diffeomorphisms on Annulus. Let $\mathbb{A}=[0,1]_{x}\times\mathbb{R}/\mathbb{Z}_{y}$ be the annulus with the standard area form $\omega_{0}=dxdy$, and fix a primitive $\beta_{0}=xdy$ such that $d\beta_{0}=\omega_{0}.$ let $f$ be an area-preserving, orientation preserving diffeomorphism on $\mathbb{A}$ that is rotation of $\theta_{1}$, $\theta_0$ near $\partial\mathbb{A}=A_{1}\cup A_{0}$ respecitively. That is, there are  
$\theta_{1}, \theta_0 \in\mathbb{R}$ and a lift $\tilde{f}$ of $f$ to the universal cover $\tilde{\BA}=[0,1]\times \R$ of $\BA$,  such that 
\EQN{\tilde{f}(x,y)=\begin{cases}(x, y+\theta_1)& \quad \text{for $x$ sufficiently close to 1,}\\
(x, y+\theta_{0})& \quad \text{for $x$ sufficiently close to 0.}\end{cases}}

\begin{definition}

The {\it flux} of $f$, denoted as $F$, is the area, in $\tilde{\BA}$ between $f(l)$ and $l$, where $[l]$ is the generator of $H_1(\BA,\partial\BA)$ oriented from $\BA_{0}$ to $\BA_1$, with respect to the standard area form $\tilde{\omega}$ on $\tilde{\BA}$.

\end{definition}

There is a natural relationship between the flux and the rotation
vector, see \cite{Qu24} for the proof. 

\begin{lemma}
Let $F$ be the flux of $\tilde{f}$, let $\alpha=dy$, notice $[\alpha] \in
H^{1}(\BA, \R)$,  we have: 
$$F=\langle\rho(\omega), [\alpha]\rangle$$
here $\rho(\omega)$ is the rotation vector of $f$ with respect to the area
$\omega$.
\end{lemma}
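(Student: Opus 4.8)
The plan is to evaluate both sides as integrals over a single fundamental domain of the cover $\tilde\BA$ and to match them using nothing more than area-preservation together with the deck transformation $T\colon(x,y)\mapsto(x,y+1)$. Throughout, fix the lift $\tilde f$ determined by the chosen isotopy $f^t$, and let $y\colon\tilde\BA\to\R$ denote the second coordinate function, so that $\alpha=dy$ lifts to the exact form $dy$ on $\tilde\BA$. I will abbreviate the fundamental domain by $S=[0,1]\times[0,1]$.

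First I would unwind the right-hand side. Because $\alpha=dy$ is exact on $\tilde\BA$, the inner integral in the definition of $\rho(\omega)$, taken along the lifted isotopy path $\tilde\gamma(t,z)$, $t\in[0,1]$, which runs from $z$ to $\tilde f(z)$, collapses to the net vertical displacement $\int_{\tilde\gamma(t,z):t\in[0,1]}\alpha = y(\tilde f(z))-y(z)$. Hence, with $\omega=dx\,dy$,
\[
\langle\rho(\omega),[\alpha]\rangle=\int_{S}\bigl(y(\tilde f(z))-y(z)\bigr)\,d\omega(z).
\]
Next I would rewrite the first term by the area-preserving change of variables $w=\tilde f(z)$, whose Jacobian is $1$ and which carries $S$ onto the fundamental domain $\tilde f(S)$, giving $\int_{S} y(\tilde f(z))\,d\omega(z)=\int_{\tilde f(S)} y(w)\,d\omega(w)$, so that
\[
\langle\rho(\omega),[\alpha]\rangle=\int_{\tilde f(S)} y\,d\omega-\int_{S} y\,d\omega,
\]
a difference of the \emph{same} weight $y$ integrated over two fundamental domains.

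The heart of the argument is to identify this difference with the flux. The domain $S$ is bounded below and above by $\tilde l$ and $T(\tilde l)$, while $\tilde f(S)$ is bounded by $\tilde f(\tilde l)$ and $T(\tilde f(\tilde l))$; here I use that $\tilde f$ commutes with $T$ and fixes the two vertical boundary lines, which holds because $f$ preserves the boundary circles and is a rotation near them. Let $R$ be the signed region between $\tilde l$ and $\tilde f(\tilde l)$, whose area is by definition the flux $F$. Then one has the signed-chain identity $\tilde f(S)-S=T(R)-R$, so that $\langle\rho(\omega),[\alpha]\rangle=\int_{T(R)} y\,d\omega-\int_{R} y\,d\omega$. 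Since $y\circ T=y+1$ and $T$ preserves $\omega$, the substitution $w=T(z)$ yields $\int_{T(R)} y\,d\omega=\int_{R}(y+1)\,d\omega=\int_{R} y\,d\omega+\operatorname{Area}(R)$; the two copies of $\int_{R} y\,d\omega$ cancel and leave $\operatorname{Area}(R)=F$. It is worth checking the pure rotation $\tilde f(x,y)=(x,y+\theta)$ as a sanity test: there $\tilde f(S)=[0,1]\times[\theta,1+\theta]$, the displacement integral is $\theta$, and $R=[0,1]\times[0,\theta]$ has area $\theta=F$, so all three quantities agree.

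The main obstacle is the chain identity $\tilde f(S)-S=T(R)-R$ in Step~3. The clean case is when $\tilde f(\tilde l)$ is a graph over $x$, where $R$ is literally the region between two graphs; in general $\tilde f(\tilde l)$ may wiggle and fail to be a graph, so I would justify the identity at the level of signed areas (integrals with multiplicity) rather than as an equality of point sets, relying on the fact that both $\tilde l$ and $\tilde f(\tilde l)$ are relative cycles joining the two boundary lines and that the cancellation of the common middle portion of the two fundamental domains is insensitive to the shape of $\tilde f(\tilde l)$. A secondary point requiring care is that the isotopy be taken to be a rotation near each boundary, so that $R$ is a well-defined region bounded by two arcs meeting the vertical boundary lines; once this is arranged the final telescoping is robust, and no appeal to the boundary angles $\theta_0,\theta_1$ is needed.
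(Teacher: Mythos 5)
Your argument is correct, but there is nothing in the paper to compare it against: the paper offers no proof of this lemma and simply refers the reader to \cite{Qu24}, so your write-up supplies a self-contained argument that the paper omits. Your mechanism --- unwind $\langle\rho(\omega),[dy]\rangle$ into the displacement integral $\int_S \bigl(y(\tilde f(z))-y(z)\bigr)\,\omega$ (legitimate to restrict to one fundamental domain because the displacement $y\circ\tilde f-y$ is $T$-invariant, $\tilde f$ commuting with the deck transformation $T$), convert it by area-preservation into $\int_{\tilde f(S)}y\,\omega-\int_S y\,\omega$, then telescope against $T$ to extract $\operatorname{Area}(R)=F$ --- is sound, and it is in the same spirit as the Stokes-type bookkeeping the paper does carry out later (the evaluation $g_0(0,0)-g_0(\partial D)=F-\theta_1$ in the proposition following this lemma, which uses the same region between $l$ and $f(l)$). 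The one step that genuinely needs rigor is exactly the one you flag, the chain identity $\tilde f(S)-S=T(R)-R$, and your proposed fix is the right one; it can be closed as follows. For any two arcs $a,b$ crossing the strip with endpoints on the lines $x=0$ and $x=1$, close up $b-a$ by vertical segments on those two lines and let $P_{a,b}$ be the unique compactly supported integer-multiplicity $2$-chain with that boundary; uniqueness holds because the strip is contractible and a compactly supported $2$-cycle in it must vanish. Then $S=P_{\tilde l,\,T(\tilde l)}$, $\tilde f(S)=P_{c,\,T(c)}$ with $c=\tilde f(\tilde l)$ (here one uses that $\tilde f$ preserves the two vertical boundary lines, acting on them by translation, so that $\tilde f$ of the side edges of $S$ are again vertical segments), $R=P_{\tilde l,\,c}$ and $T(R)=P_{T(\tilde l),\,T(c)}$. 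The additivity $P_{a,b}+P_{b,c}=P_{a,c}$ is immediate from uniqueness of the bounding chain, and applying it to $\tilde l,\,c,\,T(c)$ and to $\tilde l,\,T(\tilde l),\,T(c)$ gives precisely $\tilde f(S)-S=T(R)-R$, valid no matter how badly $\tilde f(\tilde l)$ fails to be a graph. With that in place, your substitution $y\circ T=y+1$ finishes the proof, and your pure-rotation sanity check confirms the sign conventions.
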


In order to apply theorem \ref{Hut} or theorem \ref{shangxia}, it is natural trying to fill the interior hole of $\BA$ to get a disk $D$. 

More precisely, use the standard polar coordiante $D=(r,\theta)$ on $D$, let $\omega_{D}=2rdrd\theta$ then it is a standard area form such that $\int_{D}\omega_{D}=1$, and $\beta_{D}=r^{2}d\theta$ is a primitive form of $\omega_{D}$.

let $i_{a}: \mathbb{A}\to D $ be the immersion that send $\BA$ to the out stip of $D$ while 
$$i_{a}(A_{0})[(0,y)]=\{r=\sqrt{\frac{a}{1+a}}\}\subset D,\ i_{a}(A_{1})=\partial D.$$ 
For example we may assume that 
$$i_{a}(x,y)=(\sqrt{\frac{x}{a+1}+\frac{a}{a+1}}, y)=(r,\theta),$$
then
$i_{0}^{*}\omega_{D}=\omega_{0}$, $i_{0}^{*}\beta_{D}=\beta_{0}$, and $i_{a}^{*}\omega_{D}=\frac{1}{a+1}\omega_{0}$,
$$ i_{a}^{*}\beta_{D} = \frac{1}{(a+1)}\beta_{0} +  \frac{a}{(a+1)}dy. $$
In this way, we get a family of different diffeomorphisms on $D$, denote as $$f_{a} = i_{a}\circ f \circ i_{a}^{-1}. $$ 

Recall that $f$ is rigid rotations near $\partial A$ with rotation number $\theta_{1}$ and $\theta_{0}$. The natural extension for $f$ to the whole disk is by letting $f$ to be rigid rotation with rotation number $\theta_{0}$' in the interior hole, in which the origional point $(0,0)\in D $ is an elliptic fixed point of the extended diffeomorphism. We use the same notion $f_{a}$ to denote the extended diffeomorphisms that is rigid rotation in the interior hole.



In order to calculate the Calabi invariant and mean action of $f_a$,  on $\BA$ let $$i_{a}^{*}\beta_{D} =\beta_{a} = \frac{1}{a+1}(\beta_{0}+ady).$$ 

We see in section 2 that if $\beta_{0}$ and $\beta_{a}$ are two primitive of $\omega$ on $\BA$ such that $\beta_{0}-\beta_{a}\in H^{1}(\BA)$ is non-trivial, then the corresponding Calabi invariant and mean action of periodic orbits do not concide.  Let $g_{0}$ and $g_{a}$ be the action function for $\beta_{0}$ and $\beta_{a}$ respectively and fix $g_{a}(A_{1})=g_{0}(A_{1})=\theta_{1}$. If $\beta_{a}-\beta_{0}=\alpha$, by definition, for any $x\in\BA$, 
\be\label{delta} g_{a}(x)-g_{0}(x)=\int_{l}f^{*}\alpha-\alpha \ee 
where $l$ is any path connecting x with $A_{1}$.

Let $g_{a}\in C^{\infty}(D,\R), a\in [0,\infty)$ be the action function of $f_{a}$, denote the origional point of D as $(0,0)$, fix $g_{0}(\partial D)=g_{a}(\partial D)=\theta_{1}$,  we have the follow results.

\begin{proposition}
\begin{itemize}
\item Denote $F$ to be the flux of $f$ on $\BA$,  we have
\be g_{0}(0,0) = F; g_{a}(0,0)=\frac{F}{1+a}+\frac{a\theta_{0}}{1+a}. \ee 
\item The Calabi invariant of $f_a$ satisfyies
\be Cal(f_{a}) = \frac{Cal(f)}{(1+a)^2}+\frac{2aF}{(1+a)^2}+\frac{a^{2}\theta_{0}}{(1+a)^2}. \ee
\item For a periodic orbit $\gamma_{a}$ of $f_{a}$ that correspondence to $\gamma\in\mathcal{P}(f)$ on $\BA$ with rotation number $\rho(\gamma)$, it holds
\be \A(f_{a}, \gamma_{a})=\frac{\A(f, \gamma)}{1+a}+\frac{a}{1+a}\rho(\gamma).\ee

\end{itemize}

\end{proposition}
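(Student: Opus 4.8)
The plan is to reduce all three identities to a single formula expressing the action function $g_a$ on the image strip $i_a(\BA)$ as an affine combination of $g_0$ and the $y$-displacement function. Write $\psi := \tilde{f}_2 - y$ for the (well-defined on $\BA$) $y$-displacement of the lift $\tilde{f}=(\tilde{f}_1,\tilde{f}_2)$, so that $f^*dy - dy = d\psi$, with $\psi|_{A_1}=\theta_1$ and $\psi|_{A_0}=\theta_0$. Since $\beta_a - \beta_0 = \frac{a}{a+1}(dy-\beta_0)$, subtracting the defining equations $f^*\beta_0-\beta_0 = dg_0$ and $f^*\beta_a-\beta_a = dg_a$ gives $d(g_a-g_0) = \frac{a}{a+1}\,d(\psi-g_0)$. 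Integrating and fixing the constant by the common normalization $g_a|_{A_1}=g_0|_{A_1}=\theta_1$ (together with $\psi|_{A_1}=\theta_1$) yields the \emph{key identity} $g_a = \frac{1}{1+a}g_0 + \frac{a}{1+a}\psi$ on $\BA$. Everything else is assembled from this.

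Next I would pin down the two boundary values that appear. The normalization gives $g_0|_{A_1}=\theta_1$ directly. For the inner value I claim $g_0(0,0)=F$: taking the generator $l$ of $H_1(\BA,\partial\BA)$ from $A_0$ to $A_1$ and applying Stokes to the region $U$ swept between $l$ and $\tilde{f}(l)$, the contributions of $\beta_0 = x\,dy$ along the two boundary arcs on $\{x=0\}$ and $\{x=1\}$ are $0$ and $\theta_1$, so $\int_U \omega_0 = g_0(0,0)-g_0(1,0)+\theta_1$; since $g_0(1,0)=\theta_1$ this collapses to $g_0(0,0)=F$ by the definition of the flux. Signs must be tracked against the orientation conventions fixed for $l$ and for $F$.

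For $g_a(0,0)$ I would use that $f_a$ is rigid rotation by $\theta_0$ on the filled hole $\{r<\sqrt{a/(a+1)}\}$; there the rotation $R_{\theta_0}$ satisfies $R_{\theta_0}^*\beta_D=\beta_D$, so the action function $\hat{g}_a$ of $f_a$ is constant on the hole and equals its value $g_a|_{A_0}$ on the inner circle. Evaluating the key identity on $A_0$ with $\psi|_{A_0}=\theta_0$ and $g_0|_{A_0}=F$ gives $g_a(0,0)=\frac{1}{1+a}F+\frac{a}{1+a}\theta_0$. For the Calabi invariant I would split $Cal(f_a)=\int_D \hat{g}_a\,\omega_D$ into the strip and the hole. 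On the strip $i_a^*\omega_D=\frac{1}{a+1}\omega_0$ converts the integral into $\frac{1}{a+1}\int_\BA g_a\,\omega_0$; using the identity for $g_a$, the normalization $\int_\BA g_0\,\omega_0 = Cal(f)$, and the flux–rotation lemma in the form $\int_\BA \psi\,\omega_0 = \langle\rho(\omega_0),[dy]\rangle = F$, this strip term becomes $\frac{1}{(1+a)^2}Cal(f)+\frac{a}{(1+a)^2}F$. The hole contributes the constant $g_a(0,0)$ times its $\omega_D$-area $\frac{a}{1+a}$, namely $\frac{aF+a^2\theta_0}{(1+a)^2}$, and adding the two pieces gives the stated formula.

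Finally, for a periodic orbit $\gamma=\{p_0,\dots,p_{k-1}\}$ I would average the key identity over the orbit: $\A(f_a,\gamma_a)=\frac{1}{k}\sum_i \hat{g}_a(i_a(p_i))=\frac{1}{1+a}\cdot\frac{1}{k}\sum_i g_0(p_i)+\frac{a}{1+a}\cdot\frac{1}{k}\sum_i \psi(p_i)$. The first average is $\A(f,\gamma)$ by definition, while the second is $\frac{1}{k}\oint_{\tilde{\gamma}}dy=\langle\rho(\gamma),[dy]\rangle=\rho(\gamma)$, giving the third identity. The only genuine obstacle I anticipate is the flux computation $g_0(0,0)=F$ together with its companion $\int_\BA \psi\,\omega_0 = F$: both require care with orientation and boundary conventions and are where the geometry of the filling actually enters. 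Once these are fixed, the rest is bookkeeping organized entirely by the single decomposition of $g_a$.
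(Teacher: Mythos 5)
Your proof is correct, and it takes a genuinely more unified route than the paper's. The paper proves the three items separately: it computes $g_{0}(0,0)$ and $g_{a}(0,0)$ by two independent Stokes/path-integral computations (integrating $f^{*}\beta-\beta$ along a radial curve, once for $\beta_{0}$ and then again for $\beta_{a}$); it identifies the strip part of $Cal(f_{a})$ with $Cal(f,\beta_{a})$ and quotes its value from \cite{Qu24}; and it obtains the mean-action formula by pairing the difference $\beta_{a}-\frac{1}{1+a}\beta_{0}=\frac{a}{1+a}dy$ against $\mu_{\gamma}$, i.e.\ the mechanism of Proposition 2.8. Your single key identity $g_{a}=\frac{1}{1+a}g_{0}+\frac{a}{1+a}\psi$ (pinned down by the shared normalization on $A_{1}$) never appears in the paper, and it buys three things: $g_{a}(0,0)$ follows by evaluation at $A_{0}$ instead of a second Stokes computation; the strip integral follows from Lemma 3.2 in the form $\int_{\BA}\psi\,\omega_{0}=F$, which makes the argument self-contained exactly where the paper defers to \cite{Qu24}; and the orbit average of $\psi$ telescopes to $\rho(\gamma)$, which is the paper's third computation with the bookkeeping made explicit. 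A concrete payoff of your organization: the paper's intermediate displays for $Cal(f_{a})$ do not compose consistently as written --- the quoted value $\frac{1}{a+1}\bigl(Cal(f,\beta_{0})+\frac{a}{a+1}F\bigr)$ together with the extra prefactor $\frac{1}{a+1}$ in the final display reproduce the stated (and correct) answer only if $Cal(f,\beta_{a})$ is read as the normalized mean action $\frac{Cal(f)+aF}{1+a}$, so there is a normalization slip --- whereas your split into $\frac{Cal(f)+aF}{(1+a)^{2}}$ for the strip plus $\frac{aF+a^{2}\theta_{0}}{(1+a)^{2}}$ for the hole is internally consistent and lands on the same formula. What the paper's route retains in exchange is that each boundary value is computed purely by Stokes, without invoking the flux--rotation identity for the area measure. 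Finally, the caveat you flag about orientation conventions in $g_{0}(0,0)=F$ is real, but it is precisely the same looseness the paper itself lives with, so it is not a gap relative to its proof.
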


\begin{proof}

Let  $l$ be any curve that link $\partial D$ and $(0,0)$. By definition, 

\begin{eqnarray*}
g_{0}(0,0) - g_{0}(\partial D) &=& \int_l dg_{0} =\int_{l} (f_{0}^{*}\beta_{D}-\beta_{D}) \\
&=&\int_{i^{-1}_{0}(l)} (i_{0}^{*} (i_{0}^{-1}\circ f\circ i_{0})^{*}\beta_{D}-i_{0}^{*}\beta_{D}) \\
&=&\int_{i^{-1}_{0}(l)} (f^{*}i_{0}^{*}\beta_{D}-i_{0}^{*}\beta_{D}) \\
&=&\int_{f(l')-l'}\beta_{0} \\
&=&\int_{V} d\beta_{0}-\int_{b}\beta_{0}=F-\theta_{1} 
\end{eqnarray*}

where $i_{0}^{-1}(l)=l'$, $b$ denote a part of $\partial \BA$ between $l'$ and $f(l')$, $V$ is the area bounded by $l', f(l')$ and $b$. 

Similar, let $l$ be any curve that link $\partial D$ and the little circle $r=\frac{a}{a+1}$ in $D$, we have  

\begin{eqnarray*}
g_{a}(0,0) - g_{a}(\partial D) &=& \int_l dg_{a} =\int_{l} (f_{a}^{*}\beta_{D}-\beta_{D}) \\
&=&\int_{i^{-1}_{a}(l)} (i_{a}^{*} (i_{a}^{-1}\circ f\circ i_{a})^{*}\beta_{D}-i_{a}^{*}\beta_{D}) \\
&=&\int_{i^{-1}_{a}(l)} (f^{*}i_{a}^{*}\beta_{D}-i_{a}^{*}\beta_{D}) \\
&=&\int_{f(l')-l'}\beta_{a} \\
&=&\int_{V} d\beta_{a}-\int_{b}\beta_{a} \\
&=&\frac{1}{a+1}(F-\theta_{1})-\frac{a}{a+1}(\theta_{1}-\theta_{0}).
\end{eqnarray*}

And notice that $g_{a}$ is a constant in the interior hole, we have
$$ g_{a}(0,0)=\frac{1}{a+1}F  + \frac{a}{a+1} \theta_{0}.  $$
To calculate $Cal(f_{a})$, denote the stip part as $D_{a}=i_{a}(\BA)\subset D$ and the inside hole as $d_{a} = D\setminus D_{a}$, we have
$$\int_{D_{a}}g_{a}\omega_{D} = \int_{i_{a}(\BA)} g_{a} d\beta_{D} = 
\int_{\BA}i_{a}^{*}(g_{a} d\beta_{D}) = \int_{\BA} (g_{a}\circ i_{a})d(i_{a}^{*}\beta_{D});
$$
That is the Calabi invariant of $f$ on $\BA$ using $\beta_{a}$ and $d\beta_{a}$, hence we have 
$$\int_{D_{a}}g_{a}\omega_{D} = Cal(f,\beta_{a}) =\frac{1}{a+1}(Cal(f,\beta_{0}) + \frac{a}{a+1}F). $$

One can find more details in \cite{Qu24} about this calculation. On the other hand, $g_{a}$ is a constant on $d_a$. By definition, 
\begin{eqnarray*}
Cal(f_{a}) & = &\frac{1}{a+1} \int_{D_{a}}g_{a}\omega_{D} + \int_{d_{a}}g_{a}\omega_{D} \\
& = &\frac{1}{a+1} \int_{D_{a}}g_{a}\omega_{D} + \frac{a}{a+1} g_{a}(0,0) \\
& = &\frac{Cal(f)}{(1+a)^2}+\frac{2aF}{(1+a)^2}+\frac{a^{2}\theta_{0}}{(1+a)^2}.
\end{eqnarray*}


To proof (3.4), notice that 

\begin{eqnarray*}
\A(f_{a}, \gamma_{a})-\frac{\A(f, \gamma)}{1+a} &=& \A(f,\gamma_{a},\beta_{a})-\A(f,\gamma,\frac{1}{1+a}\beta_{0})\\
&=& \int_{\BA}(\int_{\gamma}f^{*}\frac{a}{1+a}dy-\frac{a}{1+a}dy) \\
&=&\frac{a}{1+a}\int_{\BA}(\int_{\gamma}df_{y}-dy)d\mu_{\gamma}\\
&=&\frac{a}{1+a}\rho(\gamma)
\end{eqnarray*}

the second equality is because $\beta_{a}-\frac{1}{1+a}\beta_{0}=\frac{a}{1+a}dy$.

\end{proof}

Recall our main theorem is:

\begin{theorem}(Theorem 1.4)

Let $\BA=[0,1]\times S^{1}$ be the annulus, with standard area form $\omega_0$ and fixed primitive form $\beta_0$, let $f$ be an area-preserving, orientation preserving diffeomorphism on $\BA$ that is pure rotations of $\theta_i$ near the boudary $A_{i}, i=0,1$. Then we have 
\begin{itemize}

\item If $F< Cal(f)$, then there exists $\gamma_{0}$, $\mathcal{A}(\gamma_{0})\geq Cal(f)$;
\item If $\theta_{0}< Cal(f)$, then there exists $\gamma_{1}$, $\mathcal{A}(\gamma_{1})\geq \frac{1}{2}(Cal(f)+F+\theta_{0})-\rho(\gamma_{a})$;
\item If $\theta_{0} \leq F < Cal(f) $, then  there exists $\gamma_{\infty}$, $\rho(\gamma_{\infty})\geq\theta_{0}$; and if $\rho(\gamma_{\infty})=\theta_{0}$ then $\mathcal{A}(\gamma_{\infty})\geq 2F-\theta_{0}$.

\end{itemize}

\end{theorem}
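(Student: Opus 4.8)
The plan is to reduce all three assertions to the disk result (Theorem~\ref{shangxia}) through the filling construction $f_a = i_a \circ f \circ i_a^{-1}$ and the three transfer identities of the preceding Proposition. The free parameter $a \in [0,\infty)$ interpolates between filling the interior hole ``flatly'' ($a=0$) and pushing almost all of the disk's area into the central cap ($a \to \infty$), and the three bullets correspond exactly to the distinguished values $a=0$, $a=1$ and the limit $a \to \infty$. For a fixed $a$ I would apply the upper half $Cal(f_a) \le \sup_{\gamma}\mathcal{A}(f_a,\gamma)$ of Theorem~\ref{shangxia} to $f_a$ on $D$, producing a periodic orbit $\gamma_a$ of $f_a$ with $\mathcal{A}(f_a,\gamma_a) \ge Cal(f_a)$, and then rewrite this inequality on $\BA$ using the transfer identity $\mathcal{A}(f_a,\gamma_a) = \tfrac{1}{1+a}\mathcal{A}(f,\gamma_a) + \tfrac{a}{1+a}\rho(\gamma_a)$ together with the closed formula for $Cal(f_a)$.

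The recurring subtlety is that $f_a$ carries one extra fixed point, the centre of the filled cap, which has action $g_a(0,0) = \tfrac{F + a\theta_0}{1+a}$ but no annular counterpart; I must ensure the orbit delivered by Theorem~\ref{shangxia} is not this centre. For this I would compare $Cal(f_a)$ with $g_a(0,0)$, where the Proposition's formulas give
$$ Cal(f_a) - g_a(0,0) = \frac{1}{(1+a)^2}\bigl( Cal(f) - F + a(F - \theta_0) \bigr). $$
Under each hypothesis the right-hand side is strictly positive (for $a=0$ it is $Cal(f)-F>0$; for $a=1$ it is $\tfrac14(Cal(f)-\theta_0)>0$; and for $\theta_0 \le F < Cal(f)$ it is positive for every $a\ge 0$). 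Since the centre realizes action exactly $g_a(0,0) < Cal(f_a) \le \mathcal{A}(f_a,\gamma_a)$, the orbit $\gamma_a$ cannot be the centre and therefore descends to an honest periodic orbit of $f$ on $\BA$, to which the transfer identity applies.

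The first two bullets are then immediate substitutions. For $a=0$ one has $\mathcal{A}(f_0,\gamma_0) = \mathcal{A}(f,\gamma_0)$ and $Cal(f_0) = Cal(f)$, giving $\mathcal{A}(\gamma_0) \ge Cal(f)$. For $a=1$ one has $Cal(f_1) = \tfrac14(Cal(f)+2F+\theta_0)$ and $\mathcal{A}(f_1,\gamma_1) = \tfrac12(\mathcal{A}(f,\gamma_1)+\rho(\gamma_1))$, and clearing denominators produces the stated lower bound for $\mathcal{A}(\gamma_1)$ in terms of $\rho(\gamma_1)$. For the third bullet I would keep $a$ finite, substitute both closed formulas into $\mathcal{A}(f_a,\gamma_a) \ge Cal(f_a)$, and multiply through by $(1+a)^2$ to obtain the quadratic inequality
$$ a^2\bigl(\rho(\gamma_a) - \theta_0\bigr) + a\bigl(\mathcal{A}(f,\gamma_a) + \rho(\gamma_a) - 2F\bigr) + \bigl(\mathcal{A}(f,\gamma_a) - Cal(f)\bigr) \ge 0. $$
Dividing by $a^2$ and letting $a \to \infty$ forces the leading coefficient to be non-negative in the limit, i.e.\ $\rho(\gamma_\infty) \ge \theta_0$; and on the locus where this leading coefficient vanishes, the $a$-linear coefficient must itself be non-negative, giving $\mathcal{A}(f,\gamma_\infty) + \theta_0 - 2F \ge 0$, which is precisely $\mathcal{A}(\gamma_\infty) \ge 2F-\theta_0$.

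The hard part is the final step: passing from the family $\{\gamma_a\}_{a\to\infty}$ to a single genuine periodic orbit $\gamma_\infty$ of $f$. This needs a priori bounds showing that the rotation numbers $\rho(\gamma_a)$ and mean actions $\mathcal{A}(f,\gamma_a)$ stay bounded — the former because the rotation set of a fixed $f$ on the compact annulus is a bounded interval, the latter because $g_0$ is continuous on the compact $\BA$ — so that the quadratic estimate survives the limit term by term, together with a compactness statement guaranteeing that some subsequence $\gamma_{a_n}$ converges to an actual orbit rather than escaping to the boundary or degenerating to unbounded period. I expect this to be the delicate point, and it is presumably where the ECH/open-book machinery underlying Theorem~\ref{shangxia} must be invoked for uniform control of the period and support of the orbits produced along the family $f_a$, rather than any soft argument. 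Verifying that the hypotheses of Theorem~\ref{shangxia} genuinely hold for every $f_a$ — that it is a rotation near $\partial D$ with prescribed boundary value $\theta_1$ — is a routine check I would dispatch first.
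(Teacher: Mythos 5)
Your proposal follows the same route as the paper's own proof: fill the hole via $i_a$, apply the upper inequality of Theorem~\ref{shangxia} to $f_a$ on the disk, rule out the central fixed point by comparing $g_a(0,0)$ with $Cal(f_a)$ (your positivity condition $Cal(f)-F+a(F-\theta_0)>0$ is exactly the paper's condition $(1-a)F+a\theta_0<Cal(f)$), transfer back to $\mathbb{A}$ with the Proposition's three identities, and specialize to $a=0,1,\infty$. Two small remarks on the finite-$a$ part. First, when $\theta_0$ is rational the filled cap consists entirely of periodic points, not just the centre; since $g_a$ is constant on the cap they all have action $g_a(0,0)$, so your comparison excludes them all at once --- harmless, but worth saying (the paper has the same imprecision). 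Second, your $a=1$ bound $\mathcal{A}(\gamma_1)\geq \tfrac{1}{2}\left(Cal(f)+2F+\theta_0\right)-\rho(\gamma_1)$ agrees with the paper's general inequality specialized at $a=1$; the coefficient $F$ rather than $2F$ in the theorem's second bullet appears to be a typo in the statement, so the mismatch is not an error on your side.

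The step you flag as the hard part --- extracting a single periodic orbit $\gamma_\infty$ from the family $\{\gamma_a\}_{a\to\infty}$ --- is indeed a genuine gap, but be aware it is a gap you share with the paper: the paper's proof ends by asserting that ``$a=0, a=1, a=\infty$ give the results of the theorem'' and supplies no limiting argument at all. Your asymptotic analysis of the quadratic inequality is the right computation, and it does finish the proof whenever some single orbit recurs as $\gamma_{a_n}$ along a sequence $a_n\to\infty$ --- in particular whenever $\mathcal{P}(f)$ is finite, which is essentially the pigeonhole observation in the paper's Remark after the proof. In the complementary case, the soft argument (weak-$\ast$ compactness of invariant probability measures plus continuity of $\mathcal{A}$ and $\rho$) only yields an $f$-invariant \emph{measure} with $\rho\geq\theta_0$, not a periodic orbit, and neither ergodic decomposition nor the contact-geometric input behind Theorem~\ref{shangxia} (which is invoked once per fixed $a$, with no uniformity in $a$) closes this as stated. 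So your instinct about where the difficulty lies is correct; the paper does not resolve it either, and the third bullet should be regarded as established only modulo this limiting/compactness step.
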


\begin{proof}

If  $g_{a}(0,0) < Cal(f_{a})$,  by applying Theorem \ref{shangxia} for $f_{a}$ on $D$, there exists a periodic orbit $\gamma_{a}$ other than $(0,0)$ such that 
\be\label{fa} \A(f_{a}, \gamma_{a})\geq Cal(f,\beta_{a}) \ee

$g_{a}(0,0) < Cal(f_{a})$ is equivalent to \be (1-a)F + a \theta_{0} < Cal(f) \ee


\ref{fa} is equivalent to 
$$\frac{\A(f, \gamma_{a})}{1+a}+\rho(\gamma_{a})\frac{a}{1+a}\geq\frac{Cal(f)}{(1+a)^2}+\frac{2aF}{(1+a)^2}+\frac{a^{2}\theta_{0}}{(1+a)^2} $$  which imply
\be
\A(f,\gamma)&\geq&\frac{1}{1+a}(Cal(f) +2aF+a^{2}\theta_{0}))-a\rho(\gamma_{a})  \\
&=&\frac{1}{1+a}Cal(f)+\frac{a}{1+a}(2F-\theta_{0})+a(\theta_{0}-\rho(\gamma_{a})).
\ee

Hence, if $(1-a)F + a \theta_{0} < Cal(f) $ for some $a\in\R_{\geq0}$, then there exist $\gamma_{a}\in\mathcal{P}(f)$ satisfying (3.8). In particular, $a=0, a=1, a=\infty$ give the results of theorem.

\end{proof}

\begin{remark}
In \cite{Wei21} Weiler used a different coordinate such that the total area of $\BA$ is 2, and the area form has positive part and negitave part.
\end{remark}

\

\begin{remark}

We can give more geometric explaination for this theorem. As in the following picture, let x-axis and y-axis denote the rotation number and mean action of a f-invariant measure respectively, then all the point of invariant measures is a convex set, and the red point is the f-invariant measure induced by the area form $\omega$.  


\begin{tikzpicture}[scale=2, >=stealth]

\draw[->] (0,0) -- (4.5,0) node[below] {$\rho$};
\draw[->] (0,0) -- (0,4.5) node[left] {$\mathcal{A}$};

\coordinate (theta0) at (1,0);
\coordinate (F) at (1.6,0);
\coordinate (Calf) at (3,0);

\draw (theta0) node[below] {$\theta_0$} -- ++(0,0.1);
\draw (F) node[below] {$F$} -- ++(0,0.1);
\draw (Calf) node[below] {$\text{Cal}(f)$} -- ++(0,0.1);

\coordinate (CalfY) at (0,3);
\coordinate (twoFtheta0) at (0,2.2);

\draw (CalfY) node[left] {$\text{Cal}(f)$} -- ++(0.1,0);
\draw (twoFtheta0) node[left] {$2F-\theta_0$} -- ++(0.1,0);

\draw[dashed, blue, thick] (theta0) -- ($(theta0)+(0,4)$);

\draw[blue, fill=blue!20, fill opacity=0.3, thick, smooth cycle] 
    (0.6,2.0) .. controls (0.5,2.4) and (0.6,2.8) .. 
    (0.8,3.2) .. controls (1.0,3.4) and (1.4,3.5) .. 
    (2.0,3.3) .. controls (2.6,3.1) and (3.0,2.7) .. 
    (3.1,2.2) .. controls (3.2,1.7) and (2.8,1.3) .. 
    (2.2,1.0) .. controls (1.6,0.7) and (1.0,0.9) .. 
    (0.6,2.0);
    
\fill[red] (1,3) circle (1pt);      
\fill[red] (1,2.54) circle (1pt);   
\fill[red] (1,2.2) circle (1pt);    

\draw[green!70!black, dashed, thick] (0,3) -- (3.5,3) 
    node[right] {$a=0$};

\draw[green!70!black, dashed, thick] (0,2.25) -- (3.5,3.25)
    node[right] {$a=1$};

\draw[green!70!black, dashed, thick] (1,0) -- (1,4)
    node[above] {$a=\infty$};

\foreach \a in {0.2,0.5,2,5} {
    \pgfmathsetmacro{\intercept}{3/(1+\a) + 1.5*\a/(1+\a)}
    \pgfmathsetmacro{\slope}{\a/(1+\a)}
    \draw[green!70!black, dashed, opacity=0.7] 
        (0,\intercept) -- (3.5,\intercept+\slope*3.5);
}

\fill[red] (F) ++(0,3) circle (1pt) 
    node[above] {$\omega$};
    

\end{tikzpicture}


Assume $\theta_{0}\leq F < Cal(f)$, then for any $a\geq 0$, we have $g_{a}(0,0) < Cal(f_{a})$. The key fact is that for any $a\geq0$ there exist a invariant measure support on periodic orbit $\gamma_{a}$ over the line $\frac{1}{1+a}Cal(f)+\frac{a}{1+a}(2F-\theta_{0})+a(\theta_{0}-\rho(\gamma_{a}))$, the green lines in the picture, $a=0$ is the horizontal line, $a\to\infty$ is the vertical line and as $a$ varies from 0 to $\infty$, the slope of the green line changes continuously, and its intersection with the vertical line $\theta_{0}=\rho(\gamma_{a})$ gradually shifts from $Cal(f)$ to $2F-\theta_{0}$.

If $f$ admits finintely many periodic orbits then there must exists $\gamma$ in the overlap area of infinitely many green lines, which means there exists $\gamma$ satisfying
 $$\mathcal{A}(\gamma)\geq\max\{Cal(f),2F-\theta_{0}\}, \rho(\gamma)\leq\theta_{0}.$$
 
      For other situations like $F\leq \theta_{0}< Cal(f)$, or $\theta_{0}\geq F > Cal(f)$ , $F\geq\theta_{0} > Cal(f)$,  we could derive similar conclusions through parallel reasoning.

\end{remark}

\section{Examples}

\begin{example}[twist map]

\

Consider annunus diffeomorphism $f(x,y)=(x,y+a_{1}x+a_{0})$, where $a_{1}, a_{0}\in\mathbb{R}$. Then $y_{1}=a_{1}+a_{0}, \ y_{-1}=-a_{1}+a_{0}, \ F=2a_{0},$
and $$g(x,y)=\frac{a_{1}}{2}x^{2}+\frac{a_{1}}{2}+a_{0}$$

Or similar, consider disk diffeomorphism $f(r,\theta)=(r,\theta+a_{1}r+a_{0})$ on $D^2=[0,1]\times S^1$. Then $y=a_{1}+a_{0}$, and 
 $$g(r,\theta)=\frac{a_{1}}{2}r^{2}+\frac{a_{1}}{2}+a_{0}$$

In this case, the f-invariant measures are circles and annulus centered at the origin, and linear combinations of them. The graph of of rotation number and mean action of invariant measures acts as follow.

\end{example}


\begin{tikzpicture}

\draw[->, thick] (0,0) -- (4,0) node[below] {$\rho$};
\draw[->, thick] (0,0) -- (0,4) node[left] {$\mathcal{A}$};

\draw[red, thick] (1,1.5) -- (3,3.5);
\node[left] at (1,1.5) {$\left(-a_0+a_1, \dfrac{a_1}{2}+a_0\right)$};
\node[right] at (3,3.5) {$\left(a_0+a_1, a_1+a_0\right)$};

\node[right] at (4,2) {$\rho \in [-a_0+a_1, a_0+a_1]$};
\node[above] at (2,4) {$\mathcal{A} \in \left[\dfrac{a_1}{2}+a_0, a_1+a_0\right]$};

\end{tikzpicture}


If the projection of the graph of rotation number and mean action of invariant measures onto the x-axis is an interval, then according to Franks' theorem there must exist infinitely many periodic points. Here we give some examples beyond this situations.

\begin{example}

The pure irrational rotation on disk or annulus, the graph of rotation number and mean action of invariant measures is a single point on the first quadrant. In \cite{Le23}, Patrice Le Calvez gives a finite dimensional analysis for the irrational pseudo-rotation case on disk, that is the area-preserving disk diffeomorphisms that admit only one fixed point and no periodic points, in which the action is a constant irrational number.

On the other hand, if the mean action-rotation number graph of an area-preserving disk diffeomorpism $f$ is a single irrational point like shown in the following picture, we can prove that f is an irrational pseudo-rotation.
\end{example}


\begin{tikzpicture}

\draw[->, thick] (0,0) -- (4.5,0) node[below] {$\rho$};
\draw[->, thick] (0,0) -- (0,4.5) node[left] {$\mathcal{A}$};

\fill[red] (2.8,2.2) circle (3pt);
\node[above right] at (2.8,2.2) {$(\alpha, \mathcal{A})$};

\draw[dashed] (2.8,2.2) -- (2.8,0);
\node[below] at (2.8,-0.3) {$\alpha \in \mathbb{R}\setminus\mathbb{Q}$};

\end{tikzpicture}

\begin{example}

We construct an example in which the projection of the graph the graph of rotation number and mean action of invariant measures onto the x-axis is a single point, onto the y-axis is an interval. Thanks to David Bechara Senior for bring this example to my attention.

The idea of the construction is the following. Consider a map $\phi=\phi_{0}\circ\phi_{1}$ consists of a composition of two area-preserving diffeomorphisms on the disk $D$, where $\phi_{0}$ is pure rotation clockwisely of , say $1/4$ on the disk $D$, and $\phi_{1}$ is a diffeomorphism that supported on four small disks that symmetrically distributed with respect to the origin of $D$, on which small disk $\phi_{1}$ is a twist rotation that rotate by $1/4$ near the boundary of the small disk and rotate faster (or slower) near the origin of the small disk.  Then the rotation number of $\phi$-invariant measures are a constant number $1/4$ and the mean action is a integral that represents the rotational speed of  $\phi_{1}$ on each small disk, as shown in the follwoing picture.

\end{example}

\begin{tikzpicture}

\begin{scope}[xshift=-3cm]
\draw[thick] (0,0) circle (2cm);

\foreach \angle in {0,90,180,270} {
    \draw[dashed] (0,0) -- (\angle:2);
}

\foreach \x/\y in {0.7/0.7, -0.7/0.7, -0.7/-0.7, 0.7/-0.7} {
    \fill[blue!30] (\x,\y) circle (0.4cm);
    \draw[thick] (\x,\y) circle (0.4cm);
    \fill[black] (\x,\y) circle (1pt);
}

\draw[red, thick, ->] (40:2.3) arc (40:-40:2.3);
\node[red] at (1.3,2.0) {$\theta = \frac{1}{4}$};

\foreach \x/\y in {0.7/0.7, -0.7/0.7, -0.7/-0.7, 0.7/-0.7} {
    \draw[green!50!black, thick, ->] 
        ([shift={(0:0.15)}] \x,\y) arc (0:-270:0.15);
}
\end{scope}

\begin{scope}[xshift=3.5cm, yshift=-1.5cm]
\draw[->, thick] (0,0) -- (3,0);
\node[below] at (2.5,-0.2) {$\rho$};
\draw[->, thick] (0,0) -- (0,3) node[left] {$\mathcal{A}$};

\foreach \y in {0,1,2} {
    \draw (0.05,\y) -- (-0.05,\y);
    \node[left] at (-0.15,\y) {$\y$};
}

\draw[blue, dashed, thick] (0.75,0) -- (0.75,2.0);
\node[blue, below] at (0.75,-0.3) {$\rho = \frac{1}{4}$};

\draw[red, very thick, line cap=round] (0.75,0.75) -- (0.75,2.0);

\fill[red] (0.75,0.75) circle (2pt) node[right] {$\mathcal{A} = \frac{1}{4}$};
\fill[red] (0.75,2) circle (2pt) node[right] {$\mathcal{A} = 2$};

\end{scope}

\end{tikzpicture}


\

In particular, there is no example in which that the x-axis is a single irrational point and the y-axis is an interval, as shown in \cite{Deng21}.

\section{Further problems}

Denote $\mathcal{M}_{f}$ as the set of f-invariant measure on a surface $S$, $\mathcal{M}_{f}$ is a convex set. Consider 
$$\mathcal{A}: \mathcal{M}_{f}\to\R, \ \ \ \rho: \mathcal{M}_{f}\to\R$$
as two linear functionals, then the image of these two functionals is also convex. Denote the Lebesgue measure of the volume as $m$, if we change the $\beta$ in the definition of $\A$,  the graph of these two functionals will rotate and translate by a constant,  based on our previous analysis about the action and rotation number.


In \cite{Deng21}, Xia and Deng proved that action difference, similar to rotation number difference, can be seen as a kind of "twist condition" that implies the existence of infinitely many periodic orbits, but the existence of periodic orbit with action value between the difference action values is unidentified. The results on disk and annulus show the existence of invariant measure supported on periodic orbits such that its action between $\mathcal{A}(m)$ and the action of invariant measure supported on the boundary. It's natural to surch for a more general results, perhaps without necessarily relying on information from high-dimensional contact geometry. For instance, the approach taken by Le Calvez in \cite{Le23} may provide a way forward.


\section*{Acknowledgments}

The author wishes to thank Zhihong Xia for proposing the subject of this research, and thank Wentian Kuang, Jian Wan and Jingzhi Yan for their helpful discussions, and especially David Bechara Senior for bringing example 4.3 to the author's attention.

\section*{References}

\end{document}